\newcommand{\XX}{\overline{X}}
\newcommand{\PX}{\partial\overline{X}}
\newcommand{\C}{\mathbb{C}}
\newcommand{\R}{\mathbb{R}}
\newcommand{\Z}{\mathbb{Z}}
\newcommand{\Q}{\mathbb{Q}}
\newcommand{\N}{\mathbb{N}}
\newcommand{\aL}{\mathfrak{a}}
\newcommand{\Aut}{\operatorname{Aut}}
\newcommand{\Id}{\operatorname{Id}}
\newcommand{\Hom}{\operatorname{Hom}}
\newcommand{\rank}{\operatorname{rank}}
\newcommand{\SU}{\operatorname{SU}}
\newcommand{\Sl}{\operatorname{SL}}
\newcommand{\PSL}{\operatorname{PSL}}
\newcommand{\SL}{\operatorname{SL}}
\newcommand{\pr}{\operatorname{pr}}
\newcommand{\Symm}{\operatorname{Sym}}
\newcommand{\vol}{\operatorname{vol}}
\newcommand{\Tr}{\operatorname{Tr}}
\newcommand{\Real}{\operatorname{Re}}
\newtheorem {thrm}{Theorem}[section]
\newtheorem {prop}[thrm] {Proposition}
\newtheorem {lem}[thrm] {Lemma}
\newtheorem {kor}[thrm]{Corollary}
\theoremstyle{definition}
\theoremstyle{remark}
\newtheorem {bmrk}[thrm] {Remark}
\begin{document}
\title[]{Exponential growth of homological torsion for towers of
congruence subgroups of Bianchi groups}

\author{Jonathan Pfaff}
\address{Universit\"at Bonn\\
Mathematisches Institut\\
Endenicher Alle 60\\
D - 53115 Bonn, Germany}
\email{pfaff@math.uni-bonn.de}
\begin{abstract}
In this paper we prove that for suitable sequences of congruence subgroups of
Bianchi 
groups, including the standard exhaustive sequences of a congruence subgroup,
and
even symmetric powers of
the standard 
representation of $\Sl_2(\C)$ the size of the torsion part in the first
homology 
grows exponentially. This extends results of Bergeron and Venkatesh to a 
case of non-uniform lattices. 
\end{abstract}

\maketitle
\section{Introduction}

The aim of the present paper is to extend some recent results obtained by
Bergeron and Venkatesh 
for cocompact arithmetic groups to the case of Bianchi groups which are not
cocompact. 
Thus we shall firstly recall some of the results of Bergeron and
Venkatesh. 
Let $\mathbf{G}$ be a semisimple connected linear algebraic group defined over
$\Q$ and let 
$\Gamma$ be a congruence subgroup of $\mathbf{G}(\mathbb{Q})$. Assume that
$\Gamma$ is cocompact or equivalently that $\mathbf{G}$ is anisotropic over
$\Q$. 
Let $G:=\mathbf{G}(\R)$ be the real points of
$\mathbf{G}$ and let 
$K$ be a maximal compact subgroup of $G$. Then one can form the globally
symmetric
space $\widetilde{X}:=G/K$ and the 
locally symmetric space $X:=\Gamma\backslash\widetilde{X}$. Let
$\delta(\widetilde{X}):=\rank_{\C}(G)-\rank_{\C}(K)$

Now let $L$ be an arithmetic $\Gamma$-module. This means that there exists a
$\Q$-rational representation $\rho$ of $\mathbf{G}$ on a finite-dimensional
$\Q$-vector space $V_\Q$ 
such that $L$ is a $\Z$-lattice in $V_\Q$ which is invariant under $\Gamma$.
Restricting 
$\rho$ to $\Gamma$, one can form the homology groups $H_q(\Gamma,L)$. These are
finitely generated abelian groups 
and one lets $H_q(\Gamma,L)_{tors}$ be their torsion part. If
$\delta(\widetilde{X})=1$ and $\widetilde{X}$ is odd-dimensional, for 
arithmetic reasons one expects these groups to be large, see \cite{BV} and also 
the recent book of Calegari and Venkatesh \cite{CV}. Bergeron
and 
Venkatesh verified this conjecture in the following way: If the representation
$\rho_\R$ of $G$ on $V_\Q\otimes_\Q\R$ associated to $\rho$ is strongly
acyclic, which means that it admits a uniform spectral gap near zero (see
\cite[page 15]{BV}), then for every sequence
$\Gamma_i$ of finite 
index (congruence) subgroups of $\Gamma$ for which the injectivity radius goes
to infinity, they
proved that 
\begin{align} \label{eqBV}
\liminf_{i\to\infty}\sum_{q\equiv d
(2)}\frac{\log|H_q(\Gamma_i,L)_{tors}|}{[\Gamma:\Gamma_i]}
\ge c_{\widetilde{X}}(\rho)\vol(X),
\end{align}
see \cite[Theorem
1.4]{BV},
where $d=\frac{\dim\widetilde{X}-1}{2}$. Here $c_{\widetilde{X}}(\rho)$ is a
strictly positive constant which depends
only on the globally symmetric space $\widetilde{X}$. More precisely, if X is
any (compact) manifold 
of the form $X=\Gamma\backslash\widetilde{X}$, where $\Gamma$ is a discrete,
torsion-free subgroup of 
$G$, and if one denotes by
$\log{T_X^{(2)}(\rho)}$ 
the $L^2$-torsion of $X$ with respect to the flat bundle defined by the
restricion of $\rho_\R$ 
to $\Gamma$, then one has
\begin{align}\label{L2tors}
\log{T_X^{(2)}(\rho)}=(-1)^{d}c_{\widetilde{X}}(\rho)\vol(X). 
\end{align}

Furthermore, Bergeron and Venkatesh
conjectured that in the asymptotic formula \eqref{eqBV} only 
the $d$-th part contributes to the growth of the torsion in the limit, i.e. they
conjectured that for every $q$ the limit 
\begin{align*}
\lim_{i\to\infty}\frac{\log|H_q(\Gamma_i,L)_{tors}|}{[\Gamma:\Gamma_i]}
\end{align*}
exists, that for $q\neq d$ it is zero and that for $q=d$ it equals
$c_{\widetilde{X}}(\rho)\vol(X)$, see
\cite[Conjecture 1.3]{BV}.
This conjecture has been verified by Bergeron and Venkatesh in \cite{BV} for
cocompact
arithmetic subgroups of $\Sl_2(\C)$ and for strongly acyclic representations.
\newline

In the present paper we prove a modified analog of the result \eqref{eqBV} of
Bergeron and Venkatesh for congruence subgroups of Bianchi groups, which are
non-uniform
lattices of $\Sl_2(\C)$, and for certain
strongly acyclic representations $\rho$. The group $\SU(2)$ is a maximal 
compact subgroup of $\Sl_2(\C)$ and the globally symmetric space
$\widetilde{X}=\Sl_2(\C)/\SU(2)$ 
satisfies $\delta(\widetilde{X})=1$. Moreover, $\widetilde{X}$ is isometric to
the 3-dimensional hyperbolic 
space $\mathbb{H}^3$. Let $F=\Q(\sqrt{-D})$,
$D\in\mathbb{N}$ square-free, be an imaginary quadratic
number field and let $\mathcal{O}_D$ be its ring of integers. Let
$\Gamma(D):=\Sl_2(\mathcal{O}_D)$. This 
is a discrete subgroup of $\Sl_2(\C)$. It is not cocompact, but the covolume
$\vol(\Gamma(D)\backslash\mathbb{H}^3)$ is finite. If $\aL$ is a
non-zero ideal in $\mathcal{O}_D$, we let $\Gamma(\aL)$ be the
associated 
principal congruence subgroup of level $\aL$. This is a finite
index subgroup of $\Gamma(D)$ which is torsion-free for $N(\aL)$ sufficiently
large. A subgroup $\Gamma_0$ of $\Gamma(D)$ is called a
congruence subgroup if it contains $\Gamma(\aL)$ for some non-zero ideal $\aL$. 

For
$m\in\mathbb{N}$
we let $\rho(m)$ denote the $2m$-th symmetric power of the standard
representation of 
$\Sl_2(\C)$ on $V(m):=\Symm^{2m}\C^2$. Let $V_{\R}(m)$ be $V(m)$ regarded as a
real vector
space and let $\rho_\R(m)$ be the representation $\rho(m)$ regarded as a  real
representation. There exists a canonical lattice $L(m)$ 
in $V_\R(m)$ which is invariant under $\rho_\R(m)(\Gamma(D))$, see section
\ref{secongr}. Then $L(m)$ 
can be regarded as an arithmetic $\Gamma_0$-module. Actually, we work with a
slightly symmetrized 
version of these lattices. Let $L^*(m)$ be the dual lattice of $L(m)$ in
$V^*_\R(m)$.
Then we consider the lattice
$\overline{L}(m):=L(m)\oplus L^*(m)$ in the space $V_\R(m)\oplus V^*_\R(m)$
on which $\Sl_2(\C)$ acts with the representation
$\rho_\R(m)_\R\oplus\check{\rho}_\R(m)$, where
$\check{\rho}_\R(m)$ denotes 
the contragredient representation of $\rho_\R(m)$, which in our case is in fact 
equivalent to $\rho_\R(m)$. 

The main result of this article 
is that the size of the torsion part in the first integral homology with
coefficients 
in $\overline{L}(m)$ grows exponentially for certain sequences of congruence
subgroups of
Bianchi groups. 
More precisely, we will prove the following theorem.
\begin{thrm}\label{Mainthrm}
Let $\Gamma_0\subset\Gamma(D)$ be a
congruence subgroup and assume that $\Gamma_0$ is torsion-free. Let $\Gamma_i$,
$i\in\mathbb{N}$ be a sequence of congruence 
subgroups of $\Gamma(D)$, contained in
$\Gamma_0$, with
\begin{align}\label{AssSeq}
\lim_{i\to\infty}\frac{\kappa(\Gamma_i)\log{[\Gamma(D):\Gamma_i]}}{[
\Gamma(D):\Gamma_i]}=0, 
\end{align}
where $\kappa(\Gamma_i)$ denotes the number of cusps of $\Gamma_i$. Let
$X_0:=\Gamma_0\backslash\mathbb{H}^3$. 
Then for $m\in\mathbb{N}$ with $m\geq 3$ one has
\begin{align}\label{EqMainthrm}
\lim\inf_{i\to\infty}\frac{\log|H_1(\Gamma_i:\overline{L}(m))_{tors}|}{[
\Gamma_0:\Gamma_i]}\geq
\frac{2m(m+1)-12}{\pi}\vol(X_0).
\end{align}

\end{thrm}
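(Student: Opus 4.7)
The plan is to relate the integral homology torsion of $X_i:=\Gamma_i\backslash\mathbb{H}^3$ to the regularized analytic torsion, and then to the $L^2$-torsion of $X_0$, following the Bergeron--Venkatesh strategy adapted to the finite-volume setting. First, because the highest weight of $\rho(m)$ is moved by the Cartan involution of $\Sl_2(\C)$, the Bochner formulas provide a positive spectral gap $\lambda_0(\overline{\rho}(m))>0$ uniform in form-degree and in $i$ for the Laplacians acting on $\overline{\rho}(m)$-twisted forms over $X_i$. For $m\ge 3$ this gap is sufficiently large to survive in the non-compact quotients and to dominate the Eisenstein contribution to the spectrum near zero; this is the strong-acyclicity input.

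Next, I would invoke the Cheeger--M\"uller-type theorem for finite-volume hyperbolic $3$-manifolds developed in earlier work on regularized analytic torsion of cusped manifolds, which yields an equality
\begin{align*}
 \log T_{\operatorname{an}}(X_i,\overline{\rho}(m)) \;=\; \log\tau_R(X_i,\overline{L}(m)) \;+\; \mathcal{E}(X_i,\overline{\rho}(m)),
\end{align*}
with cusp error $\mathcal{E}$ expressed through the scattering determinant and the equivariant cohomology of the boundary tori. The symmetrization $\overline{L}(m)=L(m)\oplus L^*(m)$ is used precisely so that the associated scattering matrix has a self-dual structure for which a sharp bound $|\mathcal{E}(X_i,\overline{\rho}(m))|=O(\kappa(\Gamma_i)\log[\Gamma(D):\Gamma_i])$ can be established.

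The standard Bergeron--Venkatesh / L\"uck heat-kernel approximation for the analytic torsion, which goes through thanks to the uniform spectral gap from the first step, then gives
\begin{align*}
 \lim_{i\to\infty}\frac{\log T_{\operatorname{an}}(X_i,\overline{\rho}(m))}{[\Gamma_0:\Gamma_i]} \;=\; \log T^{(2)}(X_0,\overline{\rho}(m)) \;=\; -\frac{2m(m+1)-12}{\pi}\vol(X_0),
\end{align*}
the explicit constant coming from the Plancherel formula on $\mathbb{H}^3$ applied to $\rho(m)\oplus\check{\rho}(m)$ and matching \eqref{L2tors} with $d=1$. Under strong acyclicity the integral homology is entirely torsion and
\begin{align*}
 \log\tau_R(X_i,\overline{L}(m)) \;=\; \sum_{q=0}^{3}(-1)^q\log|H_q(\Gamma_i,\overline{L}(m))_{tors}|.
\end{align*}
Since $\log|H_0(\cdot)_{tors}|$ and $\log|H_3(\cdot)_{tors}|$ contribute $O(\kappa(\Gamma_i)\log[\Gamma(D):\Gamma_i])$ by a cuspidal-cohomology argument and $\log|H_2(\cdot)_{tors}|\ge 0$, combining the three displays with hypothesis \eqref{AssSeq} eliminates every non-$H_1$ term and yields \eqref{EqMainthrm} after division by $[\Gamma_0:\Gamma_i]$; a $\liminf$ (rather than $\lim$) must be used because one only bounds the alternating sum of torsion logarithms and hence only controls $|H_1|_{tors}$ from below modulo the unknown $|H_2|_{tors}$.

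The principal obstacle is the second step: establishing a Cheeger--M\"uller identity for cusped hyperbolic $3$-manifolds precise enough that the scattering-theoretic error is bounded by $\kappa(\Gamma_i)\log[\Gamma(D):\Gamma_i]$. This requires uniform control of Eisenstein series and of the scattering determinants along the tower, and is exactly what dictates the precise form of hypothesis \eqref{AssSeq}. A secondary but non-trivial point is verifying that $\rho(m)\oplus\check{\rho}(m)$ remains strongly acyclic down to the boundary case $m=3$ and that the $L^2$-torsion on $\mathbb{H}^3$ evaluates to the stated numerical constant $\frac{2m(m+1)-12}{\pi}$.
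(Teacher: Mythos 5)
Your proposal has two genuine gaps, and they are precisely the points where the cusped case differs from Bergeron--Venkatesh. First, the claim that ``under strong acyclicity the integral homology is entirely torsion'' fails here: strong acyclicity of $E(\rho(m))$ kills the $L^2$-cohomology, but the singular homology of the Borel--Serre compactification $\XX_i$ with coefficients in $V(m)$ (hence in $\overline{V}_\R(m)$) is \emph{never} zero --- by the result of Menal-Ferrer and Porti, $H_1(\XX_i;V(m))$ and $H_2(\XX_i;V(m))$ each have dimension $\kappa(\Gamma_i)$, generated by classes coming from the boundary tori. Consequently the Reidemeister torsion is not basis-independent and is not the alternating product of $|H_q(\Gamma_i;\overline{L}(m))_{tors}|$; it involves the covolumes of the lattices $H_q(\XX_i;\overline{L}(m))_{free}$ with respect to a chosen basis. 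Controlling these regulator terms along the tower (via integral cusp-invariant vectors, Minkowski's theorem on the boundary tori, Poincar\'e duality and the bound of the relevant indices by $|H_0|_{tors}$ and $|H_1(\cdot;L^*(m))|_{tors}$) is the main technical content of the actual proof, and your argument has no substitute for it. Relatedly, your constant is off: the $L^2$-torsion constant for $\rho(m)\oplus\check{\rho}(m)$ is $\frac{2m(m+1)}{\pi}+\frac{1}{3\pi}$, not $\frac{2m(m+1)-12}{\pi}$; the $-12$ in the theorem is not a Plancherel value but the price of working with the \emph{normalized} torsion $T_{\XX}(\rho(m))/T_{\XX}(\rho(2))$, which is the only basis-independent quantity available, and of discarding the uncontrollable $\rho(2)$-torsion terms.

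Second, your central analytic input --- a Cheeger--M\"uller identity for cusped hyperbolic $3$-manifolds with a scattering-theoretic error bounded by $O(\kappa(\Gamma_i)\log[\Gamma(D):\Gamma_i])$ --- is not an available theorem, and nothing in your sketch indicates how to prove it; uniform control of scattering determinants along congruence towers is a substantial open problem in this setting. The paper avoids analytic torsion entirely: it takes as input the Menal-Ferrer--Porti formula expressing the normalized Reidemeister torsion through special values $R_{X_i}(k,\sigma_k)$, $3\le k\le m$, of Ruelle zeta functions, and shows by an elementary covering estimate on closed geodesics that these values are negligible as $\ell(X_i)\to\infty$. The hypothesis \eqref{AssSeq} is then used not to tame Eisenstein series but to absorb the regulator and base-change factors, which grow at most like $\kappa(\Gamma_i)\log[\Gamma(D):\Gamma_i]$. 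So both the route you propose and the role you assign to \eqref{AssSeq} would need to be replaced before the argument could be completed.
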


Our main result Theorem \ref{Mainthrm} can be applied for example to the
standard
exhaustive 
sequences of congruence subgroups. Namely, the following corollary
holds. 
\begin{kor}\label{lastcor}
If $\aL_i$ is a chain of ideals in $\mathcal{O}_D$ such that
$\lim_{i\to\infty}N(\aL_i)=\infty$, then for $m\geq 3$ one has
\begin{align*}
\lim\inf_{i\to\infty}\frac{\log|H_1(\Gamma(\aL_i);\overline{L}(m))_{tors}|}{
\vol(\Gamma(\aL_i)\backslash\mathbb{H}^3)}\geq \frac{2m(m+1)-12}{\pi}.
\end{align*}
In particular, for 
any congruence subgroup $\Gamma_0$ of $\Gamma(D)$ there exists a
sequence $\Gamma_0\supset\Gamma_1\supset\dots$ 
with $[\Gamma_0:\Gamma_i]<\infty$, $\cap_{i}\Gamma_i=1$ and 
which satisfies \eqref{EqMainthrm}.
\end{kor}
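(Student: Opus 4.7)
The corollary will be deduced from Theorem \ref{Mainthrm} in two moves: verify the hypothesis \eqref{AssSeq} for principal congruence subgroups, and translate between the index normalization of the theorem and the volume normalization of the corollary. Since $\Gamma(\aL)$ is the kernel of the reduction surjection $\Gamma(D)\to\SL_2(\mathcal{O}_D/\aL)$, the index $[\Gamma(D):\Gamma(\aL)]$ is of order $N(\aL)^3$. Because $\Gamma(\aL)$ is normal in $\Gamma(D)$, a standard double-coset computation gives
\begin{equation*}
\kappa(\Gamma(\aL))=[\Gamma(D):\Gamma(\aL)]\sum_c\frac{1}{[\Gamma_c(D):\Gamma_c(\aL)]},
\end{equation*}
where $c$ ranges over the $h_D$ cusps of $\Gamma(D)$ and $\Gamma_c$ denotes the stabilizer of $c$. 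Each parabolic $\Gamma_c(D)$ contains the unipotent subgroup (isomorphic to $\mathcal{O}_D$) with finite index, and $\Gamma_c(\aL)$ contains the analogous unipotent subgroup modelled on $\aL$ with bounded index, so $[\Gamma_c(D):\Gamma_c(\aL)]$ is of order $N(\aL)$. Hence $\kappa(\Gamma(\aL))=O(N(\aL)^2)$ and
\begin{equation*}
\frac{\kappa(\Gamma(\aL_i))\log[\Gamma(D):\Gamma(\aL_i)]}{[\Gamma(D):\Gamma(\aL_i)]}=O\Bigl(\frac{\log N(\aL_i)}{N(\aL_i)}\Bigr)\longrightarrow 0,
\end{equation*}
confirming \eqref{AssSeq} for the chain $\aL_i$.

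For the first assertion, I would fix $i_0$ large enough that $\Gamma(\aL_{i_0})$ is torsion-free (as pointed out in the introduction) and apply Theorem \ref{Mainthrm} with ambient torsion-free congruence subgroup $\Gamma(\aL_{i_0})$ and subgroups $\Gamma(\aL_i)$ for $i\geq i_0$, which are contained in $\Gamma(\aL_{i_0})$ because the chain condition forces $\aL_{i_0}\mid\aL_i$. The theorem yields
\begin{equation*}
\liminf_{i\to\infty}\frac{\log|H_1(\Gamma(\aL_i);\overline{L}(m))_{tors}|}{[\Gamma(\aL_{i_0}):\Gamma(\aL_i)]}\geq\frac{2m(m+1)-12}{\pi}\vol(\Gamma(\aL_{i_0})\backslash\mathbb{H}^3).
\end{equation*}
Dividing by $\vol(\Gamma(\aL_{i_0})\backslash\mathbb{H}^3)$ and using the covering identity $[\Gamma(\aL_{i_0}):\Gamma(\aL_i)]\,\vol(\Gamma(\aL_{i_0})\backslash\mathbb{H}^3)=\vol(\Gamma(\aL_i)\backslash\mathbb{H}^3)$ converts the estimate into the volume-normalized form stated in Corollary \ref{lastcor}; the initial finite segment $i<i_0$ does not affect the $\liminf$.

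For the second assertion, given an arbitrary congruence subgroup $\Gamma_0\subset\Gamma(D)$, pick an ideal $\aL$ with $\Gamma(\aL)\subset\Gamma_0$ and set $\Gamma_i:=\Gamma(\aL^i)$. Then $\Gamma_0\supset\Gamma_1\supset\cdots$ is a descending sequence of congruence subgroups of finite index in $\Gamma_0$ with $\bigcap_i\Gamma_i=\{I\}$, because $\bigcap_i\aL^i=(0)$ in $\mathcal{O}_D$. Applying the first assertion to the chain $\aL_i=\aL^i$, then multiplying by $\vol(X_0)$ and invoking $\vol(\Gamma_i\backslash\mathbb{H}^3)=[\Gamma_0:\Gamma_i]\vol(X_0)$ (valid as orbifold covolumes even when $\Gamma_0$ has torsion), gives exactly \eqref{EqMainthrm}. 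The only substantive point in the whole argument is the cusp/index comparison in the first step, but this is a routine consequence of the normal structure of $\Gamma(\aL)\trianglelefteq\Gamma(D)$ together with the standard description of the parabolic subgroups of $\SL_2(\mathcal{O}_D)$, so no analytic input beyond Theorem \ref{Mainthrm} itself is required.
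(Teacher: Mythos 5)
Your route is essentially the paper's own: the published proof of the corollary consists precisely of verifying hypothesis \eqref{AssSeq} for the chain $\Gamma(\aL_i)$ via \eqref{eqcusp} together with Lemma \ref{Lemcusp} (i.e. $\kappa(\Gamma(\aL))=d_F[\Gamma(D):\Gamma(\aL)]/(\#(\mathcal{O}_D^*)N(\aL))$ for $N(\aL)$ large, which is exactly the double-coset/stabilizer computation you sketch) and then invoking Theorem \ref{Mainthrm}; your choice of a torsion-free ambient group $\Gamma(\aL_{i_0})$ and the index-to-volume conversion are the details the paper leaves implicit, and they are correct there because all groups involved are torsion-free, so index equals covering degree.

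The one step I would not sign off on as written is the parenthetical claim, in your treatment of the second assertion, that $\vol(\Gamma_i\backslash\mathbb{H}^3)=[\Gamma_0:\Gamma_i]\vol(X_0)$ holds ``even when $\Gamma_0$ has torsion''. The action of $\Sl_2(\C)$ on $\mathbb{H}^3$ factors through $\PSL_2(\C)$, so covolume is multiplicative only in the index of the images in $\PSL_2(\C)$. If $-\Id\in\Gamma_0$ (e.g. $\Gamma_0=\Gamma(D)$) while $-\Id\notin\Gamma_i$ — which is forced for large $i$ by $\cap_i\Gamma_i=1$, and holds for $\Gamma_i=\Gamma(\aL^i)$ as soon as $2\notin\aL^i$ — then $\vol(\Gamma_i\backslash\mathbb{H}^3)=\tfrac12\,[\Gamma_0:\Gamma_i]\vol(X_0)$, and your deduction yields \eqref{EqMainthrm} only with half the stated constant. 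This $\Sl_2$-versus-$\PSL_2$ factor is harmless when $-\Id\notin\Gamma_0$ (in particular in the setting of Theorem \ref{Mainthrm} itself, and the paper's terse proof does not address the point either), but in your write-up you should either restrict to such $\Gamma_0$ or carry the correction factor $|\Gamma_0\cap\{\pm\Id\}|/|\Gamma_i\cap\{\pm\Id\}|$ explicitly rather than asserting the identity unconditionally.
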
 

We shall now briefly describe our approach to prove our main results. 
We let $\Gamma$ be a
congruence subgroup of $\Gamma(D)$ which is torsion-free. Let $\mathbb{H}^3$ be
the hyperbolic 3-space. Then $X:=\Gamma\backslash\mathbb{H}^3$ is a hyperbolic
manifold 
with cusps of finite volume. As already observerd by Bergeron and Venkatesh, the
size of the torsion subgroups $H_q(\Gamma,L(m))_{tors}$ is closely related to
the Reidemeister torsion of $X$ with coefficients in the complex flat vector
bundle
$E(\rho(m))$ defined by the restriction of $\rho(m)$ to $\Gamma$. 
However, in our case the singular homology of this bundle never vanishes. This
is a major 
difference to the cocompact situation and causes additional difficulties:
The Reidemeister torsion with coefficients in $E(\rho(m))$ is 
not an invariant of $X$ and $\rho(m)$ but it depends on the choice of a
particular basis the in the singular homology. To overcome this problem,
Menal-Ferrer 
and Porti introduced the so called normalized Reidemeister torsion of $X$ with 
coefficients in $E(\rho(m))$, see \cite{MePo}. This torsion is defined as a
quotient of 
two Reidemeister torsions for the representations $\rho(m)$ and $\rho(2)$,
$m\geq 3$ with 
respect to bases in the singular homology coming from the boundary of the
Borel-Serre compactification of $X$  
and its definition is independent of the particular choice of such bases, see
\cite[Proposition 2.2]{MePo}. The choice of a particular
basis also enters the 
relation between the Reidemeister torsion and the size of the torsion subgroups
in terms 
of certain volume factors in the free part of the homology.

To prove our main results, in a first step we show that the normalized
Reidemeister 
torsion grows exponentially in certain sequences of coverings. The basic tool we
use is an expression 
of the normalized Reidemeister torsion in terms of special values of Ruelle zeta
functions which has 
been proved by Menal-Ferrer and Porti \cite[Theorem 5.8]{MePo}, who generalized
a
result of M\"uller \cite[equation 8.7]{Muller} to the non-compact 
case. 

In a second step, expressing the normalized Reidemeister torsion as a
weighted product 
of sizes of the groups $|H_q(\Gamma,\overline{L}(m))_{tors}|$,
$|H_q(\Gamma,\overline{L}(2)_{tors}|$  and 
a contribution from the free part in the singular homology, we show that the
latter contribution 
can be controlled in the limit and that the contribution of
$|H_1(\Gamma,\overline{L}(m))_{tors}|$ can 
be isolated in such a way that Theorem \ref{Mainthrm} can be deduced. This 
step covers  
the main part of our proof. \newline

The structure of this paper is as follows. In section \ref{secRuelle} we recall 
the definition of the normalized Reidemeister torsion and prove our main result 
about its asymptotic behaviour. In section \ref{sect} we recapitulate some
facts 
about the homology of arithmetic groups with coefficients in free
$\Z$-modules. In section \ref{secongr} we 
introduce the Bianchi groups and their congruence subgroups. The proof of
Theorem \ref{Mainthrm} and Corollary \ref{lastcor} will be carried out in the
last section
\ref{lastsec}.\newline

At the end of this introduction we want to remark that in parts of his thesis
\cite{Ra}
and
in his recent preprint \cite{Ra2} Jean Raimbault studied the asymptotic
behaviour 
of the regularized analytic torsion and of the Reidemeister torsion for more
general sequences of hyperbolic 3-manifolds and for more general strongly
acyclic coefficient systems. 
This problem is closely related to the problem studied in this paper.

\bigskip
{\bf Acknowledgement.}
I want to thank Pere Menal-Ferrer for interesting
discussions. I also want to thank Jean Raimbault for some useful remarks on
an 
earlier version of this paper.

\section{Ruelle zeta functions and the asymptotics of  normalized Reidemeister
torsion}\label{secRuelle}
Let $X$ be an oriented hyperbolic 3-manifold of finite volume with a fixed
spin-structure. Then there exists a discrete, torsion-free subgroup $\Gamma$ of
$\Sl_2(\C)$ such
that
$X=\Gamma\backslash\mathbb{H}^3$. We assume from now on that $X$ is not compact.
Then
$X$ is of the form 
\begin{align*}
X=C(X)\cup\bigsqcup_{i=1}^{\kappa(X)}F_i,
\end{align*}
with $C(X)\cap F_i=\partial C(X)\cap\partial F_i$ and $F_i\cap F_j=\emptyset$
for $i\neq j$. 
Here $C(X)$ is a compact smooth manifold with boundary $\partial C(X)$
and
each $F_i$, a cusp of $X$, is diffeomorphic to $[1,\infty)\times
T^2$, $T^2$ the 
2-dimensional torus. Moreover, on each $F_i$ the hyperbolic metric of $X$
restricts to the warped 
product metric $y^{-2}dy^2+y^{-2}g_0$, where $g_0$ denotes the suitably
normalized flat 
metric on $T^2$. Thus, gluing at 2-torus to each end of $X$ we obtain a compact
smooth 
manifold $\XX$ with boundary  such that $X$ is the interior of $\XX$ and such
that $\XX$ is homotopy-equivalent 
to $X$.

Let $\mathcal{C}(X)$ be the set 
of non-trivial closed geodesics of $X$. For $c\in\mathcal{C}(X)$ let $\ell(c)$
denote its length. Then there exists a constant $c_X$ such
that 
for each $R\in(0,\infty)$ one has 
\begin{align}\label{estlength}
\#\{c\in\mathcal{C}(X)\colon \ell(c)\leq R\}\leq c_X e^{2R},
\end{align}
see for example \cite[Lemma 4.3, Proposition 4.4]{MePo}. We let 
\begin{align*}
\ell(X):=\ell(\Gamma):= \min\{\ell(c)\colon c\in \mathcal{C}(X) \}
\end{align*}
A closed geodesic $c\in
\mathcal{C}(X)$ 
is called prime if it is the shortest among the closed geodesics having the 
same image as $c$.
The set of prime geodesics will be denoted by $\mathcal{PC}(X)$. If
$c\in\mathcal{C}(X)$, 
there exists a unique prime geodesic $c_0\in\mathcal{C}(X)$ with the same image
as $c$ 
and thus there exists a unique integer $n(c)$ such that
$\ell(c)=n(c)\ell(c_0)$. 

For $\gamma\in\Gamma$ we let $[\gamma]$ be its conjugacy class in $\Gamma$. 
We let $C(\Gamma)_s$ be the set of all conjugacy classes $[\gamma]$ such that  
$\gamma\in\Gamma$ is semisimple and non-trivial. Then the set $C(\Gamma)_s$
corresponds bijectively to $\mathcal{C}(X)$, see for example \cite[section
3]{Pf}. If $\gamma\in\Gamma$ 
is semisimple, we let $c(\gamma)$ be the closed geodesic associated to
$[\gamma]$ and we 
let $\ell(\gamma)$ be its length. Let 
\begin{align*}
M:=\left\{\begin{pmatrix}&e^{i\theta}&0\\&0&e^{-i\theta}\end{pmatrix}
\colon\theta\in[0,2\pi)\right\}; \quad H_1:=\begin{pmatrix}1&0\\
0&-1\end{pmatrix}.
\end{align*}
Then there exists a unique $m_{\gamma}\in M$ such that 
$\gamma$ is $G$-conjugate to $m_\gamma\exp(\ell(\gamma)H_1)$, see for example
\cite[section 3]{Pf}. For
$k\in\frac{1}{2}\mathbb{Z}$ we let $\sigma_k$ be the one-dimensional
representation of 
$M$ on $\C(\sigma_k):=\C$ defined by 
\begin{align*}
\sigma_k\begin{pmatrix} &e^{i\theta}&0\\ &0
&e^{-i\theta}\end{pmatrix}:=e^{2ik\theta}. 
\end{align*}
Now we let $SX$ be the unit-sphere bundle of $X$. Then there is a canonical
isomorphism 
\begin{align*}
SX\cong \Gamma\backslash G/M. 
\end{align*}
Thus every $\sigma=\sigma_k$ defines a locally homogeneous vector bundle
\begin{align*}
V(\sigma):=\Gamma\backslash G\times_\sigma\C(\sigma)
\end{align*}
over 
$SX$. Moreover, the geodesic flow $\Phi$ of $SX$ lifts to a flow $\Phi(\sigma)$
of $V(\sigma)$. Thus for every closed geodesic $c$  
its lift to $V(\sigma)$ defines an endomorphism 
$\mu_\sigma(c)$ on the fibre of
$V(\sigma)$ over $\dot{c}(0)$. Explicitly, if one regards $\mu_\sigma$ as an
endomorphism 
on $\C(\sigma)$, one has $\mu_\sigma(c(\gamma))=\sigma(m_\gamma)$ for every 
semisimple element $\gamma\in\Gamma$. Now for $s\in\C$ with $\Real(s)>2$, the
Ruelle zeta function of $X$ associated to 
the representation $\sigma$ is defined by
\begin{align}\label{Ruelle 1}
R_X(s,\sigma):=\prod_{c\in\mathcal{PC}(X)}\det{\left(\Id-\mu_\sigma(c)e^{
-s\ell(
c)}\right)}.
\end{align}
To establish the convergence of this product, we remarkt that one has
\begin{align}\label{Log RZF}
\log{R_X(s,\sigma)}=&\sum_{c\in\mathcal{PC}(X)}
\log{\left(1-\mu_\sigma(c)e^{-s\ell(c)}\right)}\nonumber\\
=&-\sum_{c\in\mathcal{PC}(X)}\sum_{k=1}^\infty\frac{{(\mu_\sigma(c)
)^{k}}
e^{-ks\ell(c)}}{k}\nonumber\\
=&-\sum_{c\in\mathcal{C}(X)}
\frac{\mu_\sigma(c)e^{-s\ell(c)}}{n(c)}.
\end{align} 

Thus by \eqref{estlength}, the last series converges absolutely for $s\in\C$
with $\Real(s)>2$. We 
remark that if $X$ is compact or if more generally $X$ is of finite volume
and
$\Gamma$ is neat, then by
\cite{Bunke}, \cite{Pf} the function $R(s,\sigma)$ admits 
a meromorphic continuation to $\C$. Under coverings, the Ruelle zeta functions
can be estimated 
as follows.

\begin{lem}\label{Lemcov}
Let $X_0$ be an oriented hyperbolic 3-manifold of finite volume. Then there exists a
constant 
$C(X_0)$ such that for each hyperbolic 3-manifold $X$ which is a finite
covering 
of $X_0$ of index $[X_0:X]$ and every $s\in [3,\infty)$ one can estimate
\begin{align*}
|\log{R_X(s,\sigma_k)}|\leq C(X_0)[X_0:X] e^{-\frac{\ell(X)}{2}}
\end{align*}
\end{lem}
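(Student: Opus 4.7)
The plan is to bound $|\log R_X(s,\sigma_k)|$ via the absolutely convergent series \eqref{Log RZF}, transfer the sum from $\mathcal{C}(X)$ down to $\mathcal{C}(X_0)$ by means of the covering map, and then control the resulting sum using \eqref{estlength} applied to the fixed manifold $X_0$.

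Since $\sigma_k$ takes values in $U(1)$, one has $|\mu_{\sigma_k}(c)|=1$ for every $c\in\mathcal{C}(X)$, and $n(c)\geq 1$, so \eqref{Log RZF} immediately yields $|\log R_X(s,\sigma_k)|\leq \sum_{c\in\mathcal{C}(X)} e^{-s\ell(c)}$. Let $p\colon X\to X_0$ denote the covering map. Because $p$ is a local isometry, for any $c\in\mathcal{C}(X)$ the composition $p\circ c$ is a closed geodesic of $X_0$ of the same length as $c$; conversely, a closed lift of a given $c_0\in\mathcal{C}(X_0)$ is determined by its initial point in the fiber $p^{-1}(c_0(0))$, which has cardinality $[X_0:X]$, and if $c_0$ admits any such closed lift then the lift (hence $c_0$) has length at least $\ell(X)$. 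Grouping the sum by images in $\mathcal{C}(X_0)$ therefore gives $\sum_{c\in\mathcal{C}(X)} e^{-s\ell(c)} \leq [X_0:X]\sum_{c_0\in\mathcal{C}(X_0),\,\ell(c_0)\geq \ell(X)} e^{-s\ell(c_0)}$.

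The remaining step, which is the only one requiring a small trick, is to isolate the factor $e^{-\ell(X)/2}$ with a constant uniform for all $s\geq 3$. I would split $e^{-s\ell(c_0)}=e^{-\ell(c_0)/2}\cdot e^{-(s-1/2)\ell(c_0)}$; the first factor is at most $e^{-\ell(X)/2}$ on the range of summation, while for $s\geq 3$ the second factor is at most $e^{-(5/2)\ell(c_0)}$. A direct dyadic decomposition of $\sum_{c_0\in\mathcal{C}(X_0)} e^{-(5/2)\ell(c_0)}$ by the integer part of $\ell(c_0)$, combined with the counting bound $\#\{c_0\in\mathcal{C}(X_0)\colon \ell(c_0)\leq R\}\leq c_{X_0} e^{2R}$ of \eqref{estlength} applied to $X_0$, shows that this series converges to a value depending only on $X_0$. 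Assembling the three estimates produces the stated bound with $C(X_0)$ independent of the covering $X$ and of $s\in[3,\infty)$; I do not anticipate any serious obstacle beyond keeping track of these uniformities, since the covering argument itself is classical.
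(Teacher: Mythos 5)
Your proposal is correct and follows essentially the same route as the paper: bound $|\log R_X(s,\sigma_k)|$ by $\sum_{c\in\mathcal{C}(X)}e^{-s\ell(c)}$ via \eqref{Log RZF}, split off a factor $e^{-\ell(X)/2}$, push the remaining sum down to $\mathcal{C}(X_0)$ with multiplicity at most $[X_0:X]$, and bound $\sum_{c\in\mathcal{C}(X_0)}e^{-(5/2)\ell(c)}$ by \eqref{estlength}. The only (immaterial) difference is that the paper extracts $e^{-\ell(X)/2}$ before transferring the sum to $X_0$, while you transfer first and then use $\ell(c_0)\geq\ell(X)$ on the image geodesics.
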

\begin{proof}
Let $\pi:X\to X_0$ be a covering of index $[X_0:X]<\infty$. Let $c$ be a closed
non-trivial geodesic 
in $X$ of length $\ell(c)$. Then $\pi(c)$ is a closed non-trivial geodesic in
$X_0$ of the 
same length. Moreover, there are at most $[X_0:X]$ closed geodesics $c'$ in $X$
projecting to $\pi(c)$. 
Thus by \eqref{Log RZF}, for $s\in [3,\infty)$ one can estimate
\begin{align*}
|\log R_X(s,\sigma_k)| \leq e^{-\frac{\ell(X)}{2}}\sum_{c\in \mathcal{C}(X)}
e^{-(s-\frac{1}{2})\ell(c)}
\leq e^{-\frac{\ell(X)}{2}}
[X_0:X]\sum_{c\in\mathcal{C}(X_0)}e^{-(s-\frac{1}{2})\ell(c)}.
\end{align*}
Thus if we let
\begin{align*}
C(X_0):=\sum_{c\in\mathcal{C}(X_0)}e^{-\frac{5}{2}\ell(c)},
\end{align*}
then $C(X_0)<\infty$ by \eqref{estlength} and the lemma follows. 
\end{proof}

Since $X$ is
homotopy-equivalent to $\XX$, we can
identify $\Gamma$ with the fundamental group of $\XX$. 
Restricting $\rho(m)$ to $\Gamma$, we can form the homology groups
$H_q(\XX;V(m))$ 
with coefficients in 
the complex $\Gamma$-module $V(m)$, see section \ref{sect}. These 
groups are finite-dimensional complex vector spaces. We assume from now on that $m\geq 1$. 

Let $\PX$ be the boundary of $\XX$, let $T_j$, $j=1\dots,\kappa(X)$ be the
components of $\PX$ and for each $j$ let $\iota:T_j\to\XX$ denote 
the inclusion. Then we obtain a map from of the fundamental group $\pi_1(T_j)$
of $T_j$ into
$\Gamma$ and
in 
this way we regard $V(m)$ as a $\pi_1(T_j)$-module. Then the
corresponding homology groups with coefficients will be denoted by
$H_q(T_j,V(m))$. For each $q$ there is a natural map 
$\iota_*:H_q(T_j;V(m))\to H_q(\XX;V(m))$. 

According to Menal-Ferrer and Porti, \cite{MePo}, this map can be used to define
bases in the homology groups $H_1(\XX;V(m))$,
$H_2(\XX;V(m))$ as follows. Fix for each $j$ a non-zero vector
$\omega_j(m)\in V(m)$
fixed under $\rho(m)(\pi_1(T_j))$. The set of such vectors is a
one-dimensional complex vector space, see \cite[Lemma 2.4]{MePo1}. 
Fix moreover for each $j$ a non-trivial cylce $\theta_j\in H_1(T_j;\Z)$ and let 
$\eta_j\in H_2(T_j;\Z)$ be a $\Z$-generator of $H_2(T_j;\Z)$. Then the elements
$\theta_j\otimes\omega_j(m)$
resp. $\eta_j\otimes\omega_j(m)$ are 
elements of $H_1(T_j,V(m))$ resp.
$H_2(T_j,V(m))$ and the following
proposition
holds.
\begin{prop}\label{PropMePo}
The maps $\iota_*:H_2(\PX;V(m))\to H_2(\XX;V(m))$ and $\iota_*:H_1(\PX;V(m))\to
H_1(\XX;V(m))$ 
are surjective. More precisely, let
$\theta:=\{\theta_1,\dots,\theta_{\kappa(X)}\}$, let
$\eta:=\{\eta_1,\dots,\eta_{\kappa(X)}\}$, and let
$\omega(m):=\{\omega_1(m),\dots,\omega_{\kappa(X)}(m)\}$. Then the sets
\begin{align}\label{baseMePo}
\mathcal{B}(\theta;\omega(m)):=\bigsqcup_{j=1}^{\kappa(X)}\{
\iota_*(\theta_j\otimes\omega_j(m))\}; \:
\mathcal{B}(\eta;\omega(m)):=\bigsqcup_{j=1}^{\kappa(X)}
\{\iota_*(\eta_j\otimes\omega_j(m))\}
\end{align}
form a basis of $H_1(\XX;V(m))$ resp. $H_2(\XX;V(m))$.
\end{prop}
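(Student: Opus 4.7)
My plan is to combine the long exact sequence of the pair $(\XX, \PX)$ in homology with Poincaré--Lefschetz duality, supplemented by a local calculation at each cusp.

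First, I would compute $H_*(T_j; V(m))$. Each $\pi_1(T_j) \cong \Z^2$ embeds in $\Gamma$ as a maximal parabolic subgroup, so its image under $\rho(m)$ consists of unipotent matrices. Since the $(2m+1)$-dimensional irreducible $\Sl_2(\C)$-module $V(m)$ is Jordan-indecomposable under any non-trivial unipotent, the invariant subspace $V(m)^{\pi_1(T_j)}$ is exactly one-dimensional, generated by $\omega_j(m)$, confirming the uniqueness asserted in \cite[Lemma 2.4]{MePo1}. A computation of $\operatorname{Tor}_*^{\Z[\Z^2]}(\Z, V(m))$ via the Koszul complex then yields $\dim H_q(T_j; V(m)) = 1, 2, 1$ for $q = 0, 1, 2$, and exhibits $\theta_j \otimes \omega_j(m)$ and $\eta_j \otimes \omega_j(m)$ as non-zero classes. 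Summing over boundary components, $\dim H_1(\PX; V(m)) = 2\kappa(X)$ and $\dim H_2(\PX; V(m)) = \kappa(X)$.

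Next, I would apply the long exact sequence of the pair
$$\cdots \to H_q(\PX; V(m)) \xrightarrow{\iota_*} H_q(\XX; V(m)) \to H_q(\XX, \PX; V(m)) \to H_{q-1}(\PX; V(m)) \to \cdots$$
together with Poincaré--Lefschetz duality $H_q(\XX, \PX; V(m)) \cong H^{3-q}(\XX; V(m))$ (using that $V(m)$ is self-dual as an $\Sl_2(\C)$-module). Surjectivity of $\iota_*$ in degrees $1$ and $2$ is equivalent to the vanishing of the map $H_q(\XX; V(m)) \to H_q(\XX, \PX; V(m))$ in those degrees, i.e.\ to the assertion that every cohomology class on $\XX$ is detected on the boundary. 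For the strongly acyclic local system $V(m)$ this would follow from Borel's regularization theorem together with the vanishing of cuspidal cohomology, a classical consequence of Matsushima's formula and the strong acyclicity of $\rho(m)$ (valid since $m \geq 1$).

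Given surjectivity, I would match dimensions. Zariski density of $\Gamma$ in $\Sl_2(\C)$ and irreducibility of $V(m)$ force $H_0(\XX; V(m)) = V(m)_\Gamma = 0$, while $H_3(\XX; V(m)) = 0$ since $\XX$ has non-empty boundary. Combined with $\chi(\XX) = 0$, this yields $\dim H_1(\XX; V(m)) = \dim H_2(\XX; V(m))$, and a further count inside the exact sequence pins both dimensions down to $\kappa(X)$, matching $|\mathcal{B}(\theta; \omega(m))| = |\mathcal{B}(\eta; \omega(m))| = \kappa(X)$. To conclude that these sets are bases I would verify linear independence cusp-by-cusp by pairing the classes against explicit Eisenstein cohomology classes supported near individual boundary tori, arranging the pairing matrix to be diagonal with non-zero diagonal entries. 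The main obstacle is the interior-cohomology vanishing used for surjectivity; the remaining steps reduce essentially to the explicit local structure at each cusp.
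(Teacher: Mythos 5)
The paper does not prove this statement; it simply cites Menal-Ferrer and Porti \cite[Proposition 2.10]{MePo}, so your proposal should be judged as an independent argument. Its skeleton -- the cusp computation $\dim H_q(T_j;V(m))=1,2,1$, the long exact sequence of $(\XX,\PX)$ combined with Poincar\'e--Lefschetz duality and self-duality of $V(m)$, $H_0=H_3=0$, $\chi(\XX)=0$, and the reduction of surjectivity of $\iota_*$ to injectivity of the restriction $H^q(\XX;V(m))\to H^q(\PX;V(m))$ for $q=1,2$ -- is the standard and correct route, and in degree $2$ it does finish the job: the classes $\eta_j\otimes\omega_j(m)$ already form a basis of $H_2(\PX;V(m))$, and a surjection between $\kappa(X)$-dimensional spaces is an isomorphism. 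But there are two genuine gaps. First, the key input (vanishing of interior cohomology) is justified by ``Borel's regularization theorem, Matsushima's formula and vanishing of cuspidal cohomology''; Matsushima's formula is the closed-quotient statement and Borel's theorem is about arithmetic groups, whereas the proposition is stated for an arbitrary finite-volume oriented hyperbolic $3$-manifold $X=\Gamma\backslash\mathbb{H}^3$. One needs either the $L^2$/Borel--Garland-type argument valid for all lattices or the geometric argument of Menal-Ferrer--Porti; as written this step is not established in the stated generality.

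Second, and more seriously, in degree $1$ surjectivity plus the dimension count $\dim H_1(\XX;V(m))=\kappa(X)$ does not yield the conclusion. Since $\dim H_1(\PX;V(m))=2\kappa(X)$, you still must show that the specific classes $\iota_*(\theta_j\otimes\omega_j(m))$, $j=1,\dots,\kappa(X)$, are linearly independent, i.e.\ that the span of the $\theta_j\otimes\omega_j(m)$ meets $\ker\iota_*$ trivially inside $H_1(\PX;V(m))$. Both subspaces have dimension $\kappa(X)$ and both are isotropic for the twisted intersection pairing on the boundary (classes from the same cusp built on the single invariant vector $\omega_j(m)$ pair to zero, classes from different cusps trivially so), so no ``half lives, half dies'' or formal duality count can distinguish them. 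Your proposed fix -- pairing against Eisenstein classes ``supported near individual boundary tori'' with a diagonal nonsingular pairing matrix -- presupposes precise knowledge of the image of $H^1(\XX;V(m))$ inside each $H^1(T_j;V(m))$, which is essentially equivalent to the statement being proved and is exactly the nontrivial content of \cite[Proposition 2.10]{MePo} (resting on the computation of the restriction map in \cite{MePo1}). As it stands, this last step is asserted rather than proved, so the proposal does not constitute a complete proof.
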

\begin{proof}
This is proved by Menal-Ferrer and Porti, \cite[Proposition 2.10]{MePo}.
\end{proof}

Since $\XX$ is homotopy-equivalent to a 2-dimensional CW-complex, 
the group  $H_3(\XX;V(m))$ is trival. Since $V(m)$ is
self-contragredient, one has $H_0(\XX;V(m)\cong H^0(\XX;V(m))$ and 
since the last group coincides with the $\Gamma$-invariant vectors in
$V(m)$, $H_0(\XX;V(m))$ is also trivial. Thus
$\mathcal{B}(\theta,\eta,\omega(m)):=\mathcal{B}(\theta;\omega(m))\sqcup\mathcal
{B}(\eta;\omega(m))$ 
defines a basis in the homology of $\XX$ with respect to the local system 
associated to $\rho(m)$.

Let
$T_{\XX}(\rho(m);\mathcal{B}(\theta,\eta,\omega(m))$ be the
Reidemeister-torsion 
of $\XX$ associated to the restriction of the complex representation $\rho(m)$
to $\Gamma$ and with respect
to the basis 
$\mathcal{B}(\theta,\eta,\omega(m))$, see \cite[section 1]{Muellereins}. 
Then by \cite[Proposition 2.2]{MePo}, for $m\geq 3$ the quotient
\begin{align*}
\mathcal{T}_{\XX}(\rho(m)):=\frac{T_{\XX}(\rho(m);
\mathcal{B}(\theta,\eta,\omega(m))}{T_{\XX}(\rho(2);\mathcal{B}(\theta,\eta,
\omega(2)))}
\end{align*}
is independent of the choice of $\theta$. It is easy to see that the quotient is
also independent of $\omega(m)$, $\omega(2)$ and $\eta$. We remark that since
$\rho(m)$ 
factors through a representation of $\PSL_2(\C)$, 
$T_{\XX}(\rho(m))$ is 
also independent of the spin-structure of $X$. This independence of the
spin-structure 
is the main reason why we restrict to even symmetric powers of the standard
representation in this article. 

We shall now study the asymptotic behaviour of the normalized Reidemeister torsion
under 
sequences of hyperbolic manifolds $X_i$. If the constant
$c_{\mathbb{H}^3}(\rho(m))$ is defined according
to \eqref{L2tors}, one has
\begin{align}\label{ConstL2tors}
c_{\mathbb{H}^3}(\rho(m))=\frac{m(m+1)}{\pi}+\frac{1}{6\pi},  
\end{align}
see \cite[5.9.3, Example 3]{BV}, \cite[Remark 2]{MP1}. 
Moreover, the following proposition holds. 
\begin{prop}\label{PropRT}
Let $X_0$ be an oriented hyperbolic 3-manifold of finite volume. Let
$X_i$, $i\in\N$ be 
a sequence of hyperbolic 3-manifolds which are finite coverings of $X_0$ such
that
$\lim_{i\to\infty}\ell(X_i)=\infty$. 
Then for every $m\in\mathbb{N}$ with $m\geq 3$ one has 
\begin{align*}
\lim_{i\to\infty}\frac{\log{\mathcal{T}_{{\XX}_i}(\rho(m))}}{\vol(X_i)}=-(c_{\mathbb
{
H}^3}(\rho(m))-c_{\mathbb{H}^3}(\rho(2)))=-\frac{m(m+1)-6}{\pi}.
\end{align*}
\end{prop}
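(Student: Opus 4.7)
My strategy is to combine the expression of the normalized Reidemeister torsion in terms of Ruelle zeta values due to Menal-Ferrer and Porti \cite[Theorem 5.8]{MePo} with the covering estimate of Lemma \ref{Lemcov}. The first step is to invoke \cite[Theorem 5.8]{MePo}, a non-compact generalization of M\"uller's identity \cite[equation 8.7]{Muller}, to extract a decomposition of the form
$$\log\bigl|\mathcal{T}_{\overline{X}_i}(\rho(m))\bigr| = -\bigl(c_{\mathbb{H}^3}(\rho(m)) - c_{\mathbb{H}^3}(\rho(2))\bigr)\vol(X_i) + \mathcal{E}_i.$$
Here the main volume-linear term is the difference of the $L^2$-torsion contributions attached to $\rho(m)$ and $\rho(2)$ via \eqref{L2tors} and \eqref{ConstL2tors} (using $d=1$, so $(-1)^d=-1$), and $\mathcal{E}_i$ is a finite integer combination of quantities $\log|R_{X_i}(s_k,\sigma_k)|$ with $s_k\geq 3$ and $\sigma_k$ ranging over the irreducible $M$-summands of the restrictions of $\rho(m)$ and $\rho(2)$ to $M$.

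The second step is then routine: apply Lemma \ref{Lemcov} to each term in $\mathcal{E}_i$ to obtain the bound
$$|\mathcal{E}_i| \leq C(X_0,m)[X_0:X_i]e^{-\ell(X_i)/2}$$
for a constant depending only on $X_0$ and $m$, as only finitely many $M$-weights contribute. Since $\vol(X_i)=[X_0:X_i]\vol(X_0)$, this yields
$$\left|\frac{\mathcal{E}_i}{\vol(X_i)}\right| \leq \frac{C(X_0,m)}{\vol(X_0)}\,e^{-\ell(X_i)/2} \longrightarrow 0$$
as $i\to\infty$. The arithmetic identity $c_{\mathbb{H}^3}(\rho(m)) - c_{\mathbb{H}^3}(\rho(2)) = (m(m+1)-6)/\pi$ (the $1/(6\pi)$ terms in \eqref{ConstL2tors} cancel) then gives the claimed limit.

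The main obstacle is extracting the precise decomposition above from the raw statement of \cite[Theorem 5.8]{MePo}. In its direct form the Menal-Ferrer--Porti identity expresses the normalized torsion via values of Ruelle zeta functions which include evaluation at $s=0$, well outside the region of absolute convergence $\Real(s)>2$ where Lemma \ref{Lemcov} applies; moreover, in the cusped setting these $s=0$ values carry non-trivial unipotent contributions from the Selberg trace formula. The entire point of the normalization by $\rho(2)$ is precisely to cancel these cusp contributions and to allow one to rewrite the $s=0$ values as a volume-linear $L^2$-torsion difference plus Ruelle-zeta values in the convergent regime. Performing this rewriting correctly, with careful bookkeeping of the $M$-weights and of the cusp corrections coming from the unipotent side of the trace formula, is the delicate technical heart of the proof.
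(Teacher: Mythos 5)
Your proposal is correct and follows essentially the same route as the paper: quote \cite[Theorem 5.8]{MePo} to write $\log\mathcal{T}_{\overline{X}_i}(\rho(m))=-\frac{1}{\pi}\bigl(m(m+1)-6\bigr)\vol(X_i)+\sum_{k=3}^{m}\log\bigl|R_{X_i}(k,\sigma_k)\bigr|$, and then make the zeta terms negligible per unit volume via Lemma \ref{Lemcov}. The ``delicate rewriting'' you flag in your last paragraph is not actually needed: the Menal-Ferrer--Porti identity is already stated with the Ruelle zeta functions evaluated at the points $s=k\geq 3$ inside the half-plane of absolute convergence (eliminating the problematic low-weight/cusp contributions is precisely what their normalization by $\rho(2)$ accomplishes), so the paper simply cites that formula, adjusts parametrizations, and applies the covering estimate.
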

\begin{proof}
By \cite[Theorem 5.8]{MePo}, taking the different parametrizations into account,
we have
\begin{align*}
\log{\mathcal{T}_{{\XX}_i}}(\rho(m))=-\frac{1}{\pi}\vol{(X_i)}(m(m+1)-6)+\sum_
{
k=3}^m\log\left|R_{X_i}(k,\sigma_k)\right|.
\end{align*}
Applying Lemma \ref{Lemcov}, the proposition follows. 
\end{proof}
To explain Proposition \ref{PropRT} we note that for
$m\in\mathbb{N}-\{0\}$ the representation $\rho(m)$ is not invariant 
under the standard Cartan-Involution of $\Sl_2(\C)$. Thus by \cite[Lemma
4.1]{BV}, for closed hyperbolic
3-manifolds 
$X$ the bundle $E(\rho(m))$ over $X$ is strongly acyclic. In particular, by the
Hodge-DeRham isomorphism, in the closed case
the homology groups $H_q(X,E(\rho(m)))$ vanish and 
the Reidemeister torsion $T_X(\rho(m))$ is an invariant of the closed manifold
$X$ and the 
representation $\rho(m)$. Moreover, as a special case of a more general Theorem
of 
Bergeron and Venkatesh \cite[Theorem 4.5]{BV} one obtains for 
a sequence of closed hyperbolic 3-manifolds $X_i$ with $\ell(X_i)\to\infty$ and
for every $m\in\mathbb{N}-\{0\}$ that 
\begin{align*}
\lim_{i\to\infty}\frac{\log{T_{X_i}(\rho(m))}}{\vol(X_i)}=-c_{
\mathbb{H}^3}(\rho(m)).
\end{align*}
Since in the closed case one has
$\log{\mathcal{T}_{X_i}}(m)=\log{T_{X_i}(\rho(m))}-\log{T_{X_i}(\rho(2))}$,
Proposition \ref{PropRT} can be seen as a modified extension of the result
of Bergeron and Venkatesh to the
non-compact 3-dimensional case.

\section{Torsion in the homology of arithmetic groups}\label{sect}
We keep the notation of the previous section. We fix a finite-dimensional
smooth representation $\rho$ of 
$\Sl_2(\C)$ on a complex vector-space $V$. We let $V_\R$ be $V$, regarded as a
real vector-space, and 
we we let $\rho_\R$ be the corresponding real representation of $\Sl_2(\C)$.
Assume  
that there exists a lattice $L$ in $V_\R$ which is invariant under
$\rho_\R(\Gamma)$. Then 
we obtain a representation $\rho_\Z:\Gamma\to\Aut_\Z(L)$. 
For 
$q=0,1,2$ we denote the associated homology resp. cohomology groups of
$\Gamma$ with coefficients 
in $L$ by $H_q(\Gamma;L)$ resp. $H^q(\Gamma;L)$. These groups can
be
computed as 
follows. 
Let $K$ be a smooth triangulation of $\XX$, containing a subcomplex $J$
triangulating $\PX$. Let $\tilde{X}$ be the universal covering of $\XX$.
Then $\tilde{X}$ is homotopy equivalent to $\mathbb{H}^3$, the hyperbolic
3-space. In particular, $\tilde{X}$ is contractible. Let
$C_q(\tilde{K})$ be
the free abelian group generated by the $q$-chains of $\tilde{K}$, let 
$C^q(\tilde{K}):=\Hom_{\Z}(C_q(\tilde{K}),\Z)$ and 
let $C_*(\tilde{K})$ resp. $C^*(\tilde{K})$ be the associated simplical
chain-
resp. cochain-complexes. Each
$C_q(\tilde{K})$ is a free $\Z[\Gamma]$ module and if one fixes an embedding
of 
$K$ into $\tilde{K}$, then the $q$-cells of $K$ form a basis of
$C_q(\tilde{K})$
over $\Z[\Gamma]$. Let  
\begin{align*}
C_q(K;L):=C_q(\tilde{K})\otimes_{\Z[\Gamma]}L;\quad
C^q(K;L):=C^q(\tilde{K})\otimes_{\Z[\Gamma]}L. 
\end{align*}
Then the $C_q(K;L)$ resp. $C^q(K;L)$ again form a chain resp. a
cochain
complex $C_*(K;L)$ resp. $C^*(K;L)$ of free $\Z$-modules of finite rank  and 
the corresponding homology resp. cohomology groups, which are topological
invariants of 
$\XX$, will be denoted by
$H_q(\XX; L)$ resp. $H^q(\overline X; L)$. Moreover, since $\tilde{X}$
is contractible, the $C_q(\tilde{K})$ 
form a free resolution of $\Z$ over $\Z[\Gamma]$ and thus one has isomorphisms
\begin{align*}
H_q(\Gamma;L)\cong H_q(\overline X;L);\quad H^q(\Gamma;L)\cong
H^q(\XX;L),
\end{align*}
where the second isomorphism follows from the isomorphism
\begin{align*}
C^q(K,L)\cong\Hom_{\Z[\Gamma]}(C_q(\tilde{K}),L),
\end{align*}
which induces an isomorphism of the corresponding cochain complexes.

Let $\tilde{J}$ be the subcomplex of $\tilde{K}$ coming from the subcomplex
$J$. 
Restricting $\rho_{\Z}$ to the image of $\pi_1(\PX)$ in $\Gamma$ and using
$\tilde{J}$, we can
form
the complex $C_*(J;L))$ 
which is a subcomplex of $C_*(K;L)$. The homology groups of $C_*(J;L)$
are
topological 
invariants of $\PX$ and will be denoted by $H_q(\PX;L)$.  Finall by
$H_q(\XX,\PX;L)$ we denote
the relative homology, i.e. the homology of the complex
$C_*(K;L)/C_*(J;L)$.

If we denote by $A$ one of the homology resp. cohomology groups, then $A$ is 
a finitely generated abelian group and thus it has a decomposition
$A=A_{free}\oplus A_{tors}$, where $A_{free}$ is a finite-rank free $\Z$-module
and where $A_{tors}$ is a
finite abelian group. Now we let $L^*:=\Hom_\Z(L,\Z)$. Then $L^*$ becomes a
$\Gamma$-module via the contragredient representation $\check{\rho}_\Z$ of
$\rho_\Z$.
Moreover, we have the following Lemma.

\begin{lem}\label{duality}
For each $q$ there is an isomorphism $H^q(\Gamma;L^*)_{tors}\cong
H_{q-1}(\Gamma;L))_{tors}$.
\end{lem}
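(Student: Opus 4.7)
The plan is to use the standard tensor–hom adjunction to identify the cochain complex computing $H^*(\Gamma;L^*)$ with the $\Z$-dual of the chain complex computing $H_*(\Gamma;L)$, and then apply the Universal Coefficient Theorem.

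First I would show that there is an isomorphism of cochain complexes
\begin{align*}
C^*(K;L^*)\;\cong\;\Hom_{\Z}(C_*(K;L),\Z).
\end{align*}
Indeed, since $L^*=\Hom_\Z(L,\Z)$ carries the contragredient $\Gamma$-action, the tensor–hom adjunction gives, for each $q$,
\begin{align*}
C^q(K;L^*)&=\Hom_{\Z[\Gamma]}(C_q(\tilde{K}),\Hom_\Z(L,\Z))\\
&\cong\Hom_\Z\bigl(C_q(\tilde{K})\otimes_{\Z[\Gamma]}L,\,\Z\bigr)\\
&=\Hom_\Z(C_q(K;L),\Z),
\end{align*}
and one checks that the coboundary on the left corresponds to the dual of the boundary on the right.

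Next I would apply the Universal Coefficient Theorem to the complex $C_*(K;L)$, which by construction consists of finitely generated free $\Z$-modules. This yields, for every $q$, a short exact sequence
\begin{align*}
0\to \Ext^1_\Z(H_{q-1}(\XX;L),\Z)\to H^q(\XX;L^*)\to \Hom_\Z(H_q(\XX;L),\Z)\to 0.
\end{align*}
Since each $H_q(\XX;L)$ is a finitely generated abelian group, $\Hom_\Z(H_q(\XX;L),\Z)$ is free, and $\Ext^1_\Z(H_{q-1}(\XX;L),\Z)$ is canonically isomorphic to the torsion subgroup $H_{q-1}(\XX;L)_{tors}$. Extracting the torsion part of the sequence above, the free quotient contributes nothing, and we obtain
\begin{align*}
H^q(\XX;L^*)_{tors}\;\cong\;H_{q-1}(\XX;L)_{tors}.
\end{align*}
Combined with the isomorphisms $H_q(\Gamma;L)\cong H_q(\XX;L)$ and $H^q(\Gamma;L^*)\cong H^q(\XX;L^*)$ recalled in the preceding discussion, this gives the lemma.

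There is no genuine obstacle here; the only point that needs a little care is verifying the adjunction at the chain level with the correct $\Gamma$-equivariance (i.e.\ that the contragredient action on $L^*$ is precisely the action needed to convert the $\Z[\Gamma]$-Hom into a $\Z$-Hom after tensoring with $L$). Everything else is a direct application of the Universal Coefficient Theorem to a complex of finitely generated free $\Z$-modules.
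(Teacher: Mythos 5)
Your proposal is correct and follows essentially the same route as the paper: identify $C^*(K;L^*)$ with $\Hom_{\Z}(C_*(K;L),\Z)$ (via the tensor--hom adjunction and the contragredient action) and then apply the Universal Coefficient Theorem, extracting the torsion part as $\operatorname{Ext}^1_\Z(H_{q-1}(\XX;L),\Z)\cong H_{q-1}(\XX;L)_{tors}$. The paper's proof is just a terser version of exactly this argument, so nothing further is needed.
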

\begin{proof}
There is an isomorphism of complexes 
\begin{align*}
C^{*}(K;L^*)\cong \Hom_{\Z}(C_{*}(K;L);\Z)
\end{align*}
and thus the statement follows from the universal coefficient theorem. 
\end{proof}

Now we let 
\begin{align*}
C_q(K,V_\R):=C_q(\tilde{K})\otimes_{\R[\Gamma]}V_\R.
\end{align*} 
The $C_q(K,V_\R)$ form a chain complex $C_*(\tilde{K},V_\R)$ of
finite-dimensional $\R$-vector spaces and
\begin{align}\label{tens}
C_q(K,V_\R)=C_q(K,L)\otimes_\Z\R.
\end{align}
The homology groups 
$H_q(K,V_\R)$ of the complex $C_*(\tilde{K},V_\R)$ are topological invariants of
$\XX$
and are equal to the 
homology groups $H_q(\XX;V_\R)$ of $\XX$ with coefficients in the local
system defined by $\rho_\R$. By \eqref{tens},
$H_q(\XX;L)_{free}$ 
is a lattice in $H_q(\XX;V_\R)$. 

Similarly, if we regard $\rho$ as a complex representation of $\Gamma$, we 
obtain the cohomology groups $H_q(\XX;V)$, which
are complex vector
spaces. 
Regarded as real vector spaces, they are equal to the $H_q(\XX;V_\R)$. 

Now assume that we are given bases
$\mathcal{B}^\R_q$ of
$H_q(\XX;V_\R)$, $q\in\{1,2\}$. Let
$\mathcal{B}^\R=\mathcal{B}_1^\R\sqcup\mathcal{B}^\R_2$.  
We define an inner
product on $H_q(\XX;V_\R)$ 
by declaring $\mathcal{B}_q^\R$ to be an orthonormal basis. By
$\vol_{\mathcal{B}_q^\R
}H_q(\XX;L)_{free}$ we denote the covolume of the lattice
$H_q(\XX;L)_{free}$ with 
respect to this inner product. Then if $T_{\XX}(\rho_\R;\mathcal{B}^\R)$ is
the
Reidemeister torsion of 
$\XX$ associated to this inner product and the local coefficients defined by
${\rho_{\R}}|_{\Gamma}$, the following lemma holds.

\begin{lem}\label{Lemreg}
One has
\begin{align*}
T_{\XX}(\rho_\R;\mathcal{B}^\R)=\frac{|H_0(\Gamma;L)_{tors}|\vol_{\mathcal{B}
_1^\R}
H_1(\XX;L)_{free}}{|H_1(\Gamma;L)_{tors}|\vol_{\mathcal{B}_2^\R}
H_2(\XX;L)_{free}}.
\end{align*}
\end{lem}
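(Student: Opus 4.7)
The plan is to derive the formula directly from the definition of Reidemeister torsion, exploiting the fact that the cellular chain complex $C_\ast(K;L)$ is a bounded complex of finitely generated free $\Z$-modules with preferred integral bases.

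First, I would note that the $q$-cells of $K$ (lifted to $\tilde K$) tensored with a fixed $\Z$-basis of $L$ give preferred $\Z$-bases $c_q$ of each $C_q(K;L)$. Tensoring with $\R$ produces $\R$-bases $c_q^\R=c_q\otimes 1$ of $C_q(K;V_\R)$, and by definition $T_{\XX}(\rho_\R;\mathcal{B}^\R)$ is the Reidemeister torsion of this real complex with respect to $c_q^\R$ in the chain groups and $\mathcal{B}_q^\R$ in the homology.

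The key step is to apply the following general algebraic fact: for any bounded chain complex $C_\ast$ of finitely generated free $\Z$-modules with integral bases, and for any choice of bases $\mathcal{B}_q$ of $H_q(C_\ast\otimes\R)$, one has
\begin{align*}
T(C_\ast\otimes\R;\mathcal{B})=\prod_q|H_q(C_\ast)_{tors}|^{(-1)^q}\cdot\prod_q\bigl(\vol_{\mathcal{B}_q}H_q(C_\ast)_{free}\bigr)^{(-1)^{q+1}}.
\end{align*}
To establish this, for each $q$ I would choose $\Z$-bases $b_q$ of $B_q:=\operatorname{Im}\partial_{q+1}$, lifts $\tilde b_{q-1}\subset C_q$ with $\partial\tilde b_{q-1}=b_{q-1}$ (which exist because the $C_q$ are free), and a $\Z$-basis $z_q$ of any complement of $B_q$ in $Z_q:=\ker\partial_q$ that projects to a $\Z$-basis of $H_q(C_\ast)_{free}$. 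The combined set $b_q\sqcup z_q\sqcup\tilde b_{q-1}$ is a $\Q$-basis of $C_q\otimes\Q$, and the alternating product of determinants of the transition matrices from this set to $c_q$ equals the Reidemeister torsion. The index of the $\Z$-span of $b_q\sqcup z_q$ in $Z_q$ equals $|H_q(C_\ast)_{tors}|$, which accounts for the torsion factors, and passing from the lifts $z_q$ to the chosen real basis $\mathcal{B}_q$ of the free part of homology introduces the covolumes $\vol_{\mathcal{B}_q}H_q(C_\ast)_{free}$ with the sign $(-1)^{q+1}$.

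To conclude, I specialize to our situation: $H_3(\XX;L)=0$ because $\XX$ is homotopy equivalent to a $2$-complex, and for the representations considered $H_0(\XX;L)$ is pure torsion (since $V_\R^\Gamma=0$), so no covolume term for $H_0$ appears. Substituting $q\in\{0,1,2\}$ into the general identity then yields exactly the stated expression. The main obstacle is the careful sign and index bookkeeping needed to justify the general algebraic identity; since this is a classical result in the theory of Reidemeister torsion (essentially the Franz--Milnor formula for integral chain complexes with a choice of bases in real homology), I would cite it from \cite{Muellereins} rather than reprove it in detail.
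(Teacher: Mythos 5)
Your proposal is correct and follows essentially the same route as the paper: the identity you sketch (torsion of an integral chain complex expressed through the orders of $H_q(\cdot)_{tors}$ and the covolumes of $H_q(\cdot)_{free}$ with respect to the chosen real bases, specialized using $H_3(\XX;L)=0$, $H_2(\XX;L)$ free and $H_0(\XX;L)$ pure torsion) is exactly the statement the paper invokes, and its own proof is just the citation to Bergeron--Venkatesh \cite[section 2.2]{BV}. The only quibble is the reference: the bookkeeping you describe is carried out in \cite[section 2.2]{BV} rather than in \cite{Muellereins}, and note that strictly speaking $B_q$ need not admit a complement in $Z_q$ when $H_q$ has torsion, so one should phrase the choice of $z_q$ as lifts of a basis of the free quotient, as your index computation in fact assumes.
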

\begin{proof}
This is proved by Bergeron and Venkatesh \cite[section 2.2]{BV}.
\end{proof}

We finally have to relate the Reidemeister torsion of the representation
$\rho_\R$
to the Reidemeister torsion of the representation $\rho$, regarded as a
complex 
representation. If $\mathcal{B}_q$, $q\in\{1,2\}$ are bases of
$H_q(\XX;V)$, $\mathcal{B}=\mathcal{B}_1\sqcup\mathcal{B}_2$, 
we define bases $\mathcal{B}^{\R}_q$ of $H_q(\XX;V_\R)$ by
\begin{align}\label{BaseR}
\mathcal{B}^{\R}_q:=\mathcal{B}_q\sqcup\{\sqrt{-1}\omega\colon \omega\in
\mathcal{B}_q\};\quad
\mathcal{B}^{\R}:=\mathcal{B}^{\R}_1\sqcup\mathcal{B}^\R_2. 
\end{align}
Then the following Lemma holds.
\begin{lem}\label{LemMaMu}
For any complex basis $\mathcal{B}$ as above one has 
\begin{align*}
T_X(\rho;\mathcal{B})^2=T_X(\rho_\R;\mathcal{B}^{\R}).
\end{align*}
\end{lem}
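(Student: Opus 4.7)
The plan is to reduce Lemma~\ref{LemMaMu} to the following purely algebraic statement: if $(D_\ast,\partial)$ is a bounded chain complex of finite-dimensional complex vector spaces with complex bases $c_q$ on $D_q$ and complex bases $h_q$ on $H_q(D_\ast)$, and if $D_\ast^\R$ denotes the underlying real chain complex with doubled bases $c_q^\R=c_q\sqcup\{\sqrt{-1}v:v\in c_q\}$ and $h_q^\R=h_q\sqcup\{\sqrt{-1}w:w\in h_q\}$, then
$\tau(D_\ast^\R;c_\ast^\R,h_\ast^\R)=|\tau(D_\ast;c_\ast,h_\ast)|^{2}$.

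To apply this to the situation of the lemma, I would first pick a $\Z[\Gamma]$-basis of $C_q(\tilde K)$ coming from a lift of the cells of $K$, together with a fixed complex basis $v_1,\dots,v_n$ of $V$. Tensoring over $\Z[\Gamma]$, the cells times the $v_i$ furnish a complex basis of $C_q(K;V)$, and the cells times the doubled list $v_1,\dots,v_n,\sqrt{-1}v_1,\dots,\sqrt{-1}v_n$ furnish a real basis of $C_q(K;V_\R)$. With these choices $C_\ast(K;V_\R)$ is literally the realification of $C_\ast(K;V)$ equipped with doubled cellular bases, while the prescription \eqref{BaseR} arranges precisely the same doubling relation on the homology side. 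The claim of the lemma is thus the algebraic claim above applied to $D_\ast=C_\ast(K;V)$ with the chosen bases, after remarking that $T_\XX$ is defined as a positive real number, so $|\tau_\C|^{2}=T_\XX(\rho;\mathcal{B})^{2}$.

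It remains to establish the algebraic claim, for which I would use the standard determinantal formula $\tau=\pm\prod_q\det(A_q)^{(-1)^q}$, where $A_q$ is the $\C$-linear change-of-basis matrix between $c_q$ and a basis of $D_q$ built from a chosen complex basis $\mathfrak b_q$ of the boundaries $B_q\subset D_q$, a lift of $h_q$ to cycles, and a lift of $\mathfrak b_{q-1}$ to $D_q$ via $\partial$. Making all these auxiliary complex bases and lifts first in $D_\ast$ and then doubling them for $D_\ast^\R$, each real change-of-basis matrix becomes the block matrix $\bigl(\begin{smallmatrix}\Real A_q & -\Imag A_q\\ \Imag A_q & \Real A_q\end{smallmatrix}\bigr)$ with $\det_\R(A_q)=|\det_\C(A_q)|^{2}$. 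The alternating product yields $\tau_\R=|\tau_\C|^{2}$, as required.

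The only real obstacle is bookkeeping: one must check that every auxiliary choice (bases of boundaries, lifts of homology classes, lifts of boundaries back to chains) made in the complex complex is compatible with doubling in the sense that its doubled version is a valid auxiliary choice for the real complex. This reduces to the evident observation that if a complex subspace $W\subset D_q$ has complex basis $w_1,\dots,w_k$, then its realification has the doubled basis $w_1,\dots,w_k,\sqrt{-1}w_1,\dots,\sqrt{-1}w_k$, and this doubling commutes with taking kernels, images, quotients, and pre-images under $\C$-linear maps.
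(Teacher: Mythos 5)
Your argument is correct, and it is essentially the argument the paper has in mind: the paper's proof is just a pointer to Marshall--M\"uller \cite[Lemma 2.4]{MaM}, whose proof rests on exactly the reduction you make, namely that realification doubles all the auxiliary bases compatibly (it commutes with boundaries, cycles and the passage to homology) and that each real change-of-basis matrix is the realification of the complex one, so $\det_\R A=|\det_\C A|^2$ and Milnor's alternating product gives $T_X(\rho_\R;\mathcal{B}^\R)=|T_X(\rho;\mathcal{B})|^2$. Your version is simply a self-contained write-up of that same determinant-doubling argument, with the harmless extra remark that the torsions are positive so the modulus can be dropped.
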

\begin{proof}
For the proof one can proceed as in the proof of \cite[Lemma 2.4]{MaM}.
\end{proof}

\section{Congruence
subgroups of Bianchi groups}\label{secongr}
In this section we collect some basic properties of the Bianchi groups and
their congruence subgroups which are needed for our purposes. 
Let us firstly recall the definition of these groups. We let $F:=\Q(\sqrt{-D})$,
$D\in\mathbb{N}$ square-free, be an imaginary
quadratic 
number field and $d_F$ be its class number. Let $\mathcal{O}_D$ be the ring of
integers of $F$, i.e.
$\mathcal{O}_D=\Z+\sqrt{-D}\Z$ if 
$D\equiv 1,2$ modulo $4$, $\mathcal{O}_D=\Z+\frac{1+\sqrt{-D}}{2}\Z$ if
$D\equiv 3$ modulo $4$. We let
$\Gamma(D):=\Sl_2(\mathcal{O}_D)$ be the associated Bianchi-group. Then
$X_D:=\Gamma(D)\backslash\mathbb{H}^3$ 
is of finite volume
\begin{align*}
\vol(X_D)=\frac{|\delta_F|^{\frac{3}{2}}\zeta_F(2)}{4\pi^2},
\end{align*}
where $\zeta_F$ is the Dedekind zeta function of $F$ and $\delta_F$ is 
is the discriminant of $F$, see \cite{Hu}, \cite[Proposition 2.1]{Sa}. 
Let $\aL$ be any nonzero ideal in $\mathcal{O}_D$ and let
$N(\aL)$ denote its norm. 
Then the associated principal congruence subgroup $\Gamma(\aL)$ is defined as
\begin{align*}
\Gamma(\aL):=\left\{\begin{pmatrix}a&b\\
c&d\end{pmatrix}\in\Sl_2(\mathcal{O}_D)\colon a-1\in\aL; d-1\in\aL;
b,c\in\aL\right\}.
\end{align*}
A subgroup $\Gamma$ of $\Gamma(D)$ is called a
congruence subgroup 
if there exists a non-zero ideal $\aL$ in $\mathcal{O}_D$ such that $\Gamma$
contains $\Gamma(\aL)$ as a subgroup of finite index. 
We recall that by \cite[Corollary 5.2]{Bass} the sequence 
\begin{align*}
1\to\Gamma(\aL)\to \Gamma(D)\to\Sl_2(\mathcal{O}_D/\aL)\to 1
\end{align*}
is exact. Thus, arguing exactly as in \cite[Chapter 1.6]{Sh} for the
$\Sl_2(\R)$-case, one
obtains 
\begin{align}\label{eqcusp}
[\Gamma(D):\Gamma(\aL)]=\#\Sl_2(\mathcal{O}_D/\aL)=N(\aL)^3\prod_{\mathfrak{p}
|\aL}\left(1-\frac
{1}{N(\mathfrak{p})^2}\right),
\end{align}
where the product is taken over all prime ideals $\mathfrak{p}$ in
$\mathcal{O}_D$ dividing $\aL$. 
Let $\mathbb{P}^1(F)$ be the one-dimensional projective space of $F$. As usual,
we 
write $\infty$ for the element $[1,0]\in \mathbb{P}^1(F)$.
Then $\Sl_2(F)$ acts naturally on $\mathbb{P}^1(F)$ and by \cite[Chapter 7.2,
Proposition 2.2]{EGM}
one has
$\kappa(\Gamma(D))=\#\left(\Gamma(D)\backslash\mathbb{P}^1(F)\right)$,
$\kappa(\Gamma(\aL))=\#\left(\Gamma(\aL)\backslash\mathbb{P}^1(F)\right)$.
Furthermore, by \cite[Chapter 7.2, Theorem 2.4]{EGM}
one has $\kappa(\Gamma(D))=d_F$. Let $P=MAN$ be the
standard parabolic subgroup 
of $\Sl_2(\C)$, where $M$ is as above and where
\begin{align*}
A=\left\{\begin{pmatrix}\lambda&0\\0&\lambda^{-1}\end{pmatrix},\lambda\in\R,
\lambda>0\right\};\quad
N=\left\{\begin{pmatrix}1&b\\0&1\end{pmatrix},b\in\C\right\}.
\end{align*}
Then $P$ is the stabilizer of $\infty$ in $\Sl_2(\C)$. For each
$\eta\in\mathbb{P}^1(F)$ we fix $B_\eta\in\Sl_2(F)$ with
$B_{\eta}\eta=\infty$. 
Then $P_\eta:=B_{\eta}^{-1}PB_{\eta}$ is  the stabilizer of $\eta$ in
$\SL_2(\C)$. We let 
$N_\eta:=B_{\eta}^{-1}NB_{\eta}$. If $\eta\in\mathbb{P}^1(F)$, we let 
$\Gamma(D)_\eta$ resp. $\Gamma(\aL)_\eta$ be the stabilizer of $\eta$ 
in $\Gamma(D)$ resp. $\Gamma(\aL)$. 

The next Lemma is certainly well known to experts. However, since we could not
find a reference, we 
include a proof here. We let $\mathcal{O}_D^*$ be the group of units 
of $\mathcal{O}_D$, i.e. $\mathcal{O}_D^*=\{\pm 1\}$ for $D\neq 1,3$, 
$\mathcal{O}_D^*=\{\pm 1,\pm\sqrt{-1}\}$ for $D=1$, $\mathcal{O}_D^*=\{\pm
1,\pm\frac{1\pm\sqrt{-3}}{2}\}$ for $D=3$.

\begin{lem}\label{Lemcusp}
Let $\aL$ be an ideal in $\mathcal{O}_D$. Then for $N(\aL)$ sufficiently large
one has
$\kappa(\Gamma(\aL))=d_F\frac{[\Gamma(D):\Gamma(\aL)]}{\#(\mathcal{O}_D^*)N(\aL)
}$.
\end{lem}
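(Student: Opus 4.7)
The plan is to exploit the short exact sequence
\[
1 \to \Gamma(\aL) \to \Gamma(D) \to \Sl_2(\mathcal{O}_D/\aL) \to 1
\]
already recalled in the excerpt, so that the finite group $\Gamma(D)/\Gamma(\aL)$ acts on the finite set $\Gamma(\aL)\backslash\mathbb{P}^1(F)$ of cusps of $\Gamma(\aL)$ with orbits corresponding bijectively to the $d_F$ cusps of $\Gamma(D)$. Fixing representatives $\eta_1,\dots,\eta_{d_F}$ of the $\Gamma(D)$-orbits on $\mathbb{P}^1(F)$, the orbit-stabilizer theorem applied to the class of $\eta_i$ in $\Gamma(\aL)\backslash\mathbb{P}^1(F)$ yields an orbit of cardinality $[\Gamma(D):\Gamma(\aL)]/[\Gamma(D)_{\eta_i}:\Gamma(\aL)_{\eta_i}]$, and summing gives
\[
\kappa(\Gamma(\aL)) = \sum_{i=1}^{d_F}\frac{[\Gamma(D):\Gamma(\aL)]}{[\Gamma(D)_{\eta_i}:\Gamma(\aL)_{\eta_i}]}.
\]
It therefore suffices to show that, for $N(\aL)$ sufficiently large, $[\Gamma(D)_{\eta_i}:\Gamma(\aL)_{\eta_i}]=\#(\mathcal{O}_D^*)\cdot N(\aL)$ for each $i$.

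To compute this index, I would analyze $\Gamma(D)_{\eta_i}$ by transporting via $B_{\eta_i}$. Writing $B_{\eta_i}^{-1}=\begin{pmatrix}\alpha_i&\gamma_i\\ \beta_i&\delta_i\end{pmatrix}\in\Sl_2(F)$, every element of $P_{\eta_i}$ has the form $B_{\eta_i}^{-1}\begin{pmatrix}u&b\\0&u^{-1}\end{pmatrix}B_{\eta_i}$ with $u\in F^*$, $b\in F$. Setting $\mathfrak{b}_i:=(\alpha_i,\beta_i)$, the determinant relation $\alpha_i\delta_i-\beta_i\gamma_i=1$ forces $\mathfrak{b}_i\cdot(\gamma_i,\delta_i)=\mathcal{O}_D$, hence $(\gamma_i,\delta_i)=\mathfrak{b}_i^{-1}$. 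A direct conjugation computation then shows that such an element lies in $\Sl_2(\mathcal{O}_D)$ if and only if $u\in\mathcal{O}_D^*$ and $b\in\mathfrak{b}_i^{-2}$ (integrality of the trace $u+u^{-1}$ forces $u\in\mathcal{O}_D^*$ since $\mathcal{O}_D$ is integrally closed, and the off-diagonal entries $b\alpha_i^2$, $-b\beta_i^2$ being in $\mathcal{O}_D$ is equivalent to $b\in\mathfrak{b}_i^{-2}$). This produces a short exact sequence
\[
1 \to \mathfrak{b}_i^{-2} \to \Gamma(D)_{\eta_i} \to \mathcal{O}_D^* \to 1,
\]
in which the translation lattice $\mathfrak{b}_i^{-2}$ is identified with $N_{\eta_i}\cap\Gamma(D)$. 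An identical analysis for $\Gamma(\aL)_{\eta_i}$, where one further requires the matrix to be $\equiv I\bmod\aL$, replaces the translation lattice by $\mathfrak{b}_i^{-2}\aL$ (a subgroup of index $N(\aL)$ in $\mathfrak{b}_i^{-2}$) and the Levi part by the kernel of the reduction $\mathcal{O}_D^*\to(\mathcal{O}_D/\aL)^*$.

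The hypothesis ``$N(\aL)$ sufficiently large'' enters only in the final step: since $\mathcal{O}_D^*$ is a finite set, one has $N(\aL)>\max_{u\in\mathcal{O}_D^*\setminus\{1\}}N(u-1)$ for all but finitely many ideals, and for such $\aL$ the reduction map $\mathcal{O}_D^*\to(\mathcal{O}_D/\aL)^*$ is injective. Consequently the Levi quotient of $\Gamma(\aL)_{\eta_i}$ is trivial, $\Gamma(\aL)_{\eta_i}=\mathfrak{b}_i^{-2}\aL$, and hence $[\Gamma(D)_{\eta_i}:\Gamma(\aL)_{\eta_i}]=\#(\mathcal{O}_D^*)\cdot N(\aL)$ independently of $i$. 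Plugging this uniform value into the sum over the $d_F$ cusps gives the claimed formula. The main technical point is the conjugation computation in the second paragraph, in particular the identification of the translation lattice with $\mathfrak{b}_i^{-2}$ and the verification that \emph{every} unit lifts to the Levi part of $\Gamma(D)_{\eta_i}$; both rest on the Bezout-type identity $\mathfrak{b}_i\cdot\mathfrak{b}_i^{-1}=\mathcal{O}_D$ forced by $\alpha_i\delta_i-\beta_i\gamma_i=1$.
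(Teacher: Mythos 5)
Your overall strategy coincides with the paper's: count cusps as orbits of the finite quotient $\Gamma(D)/\Gamma(\aL)$ acting on $\Gamma(\aL)\backslash\mathbb{P}^1(F)$, reduce to showing $[\Gamma(D)_{\eta_i}:\Gamma(\aL)_{\eta_i}]=\#(\mathcal{O}_D^*)N(\aL)$ for $N(\aL)$ large, and obtain this by splitting the cusp stabilizer into a translation lattice (namely $\mathfrak{b}_i^{-2}$, resp.\ $\aL\,\mathfrak{b}_i^{-2}$ of index $N(\aL)$, exactly the EGM computation quoted in the paper) and a unit part which disappears in $\Gamma(\aL)$ for large $N(\aL)$. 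The skeleton and the orbit--stabilizer bookkeeping are correct.

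The step you yourself single out as the main technical point is, however, flawed as stated: from $\alpha_i\delta_i-\beta_i\gamma_i=1$ you may only conclude $1\in\mathfrak{b}_i\cdot(\gamma_i,\delta_i)$, i.e.\ $\mathfrak{b}_i(\gamma_i,\delta_i)\supseteq\mathcal{O}_D$, not equality, so $(\gamma_i,\delta_i)=\mathfrak{b}_i^{-1}$ is false for a general fixed $B_{\eta_i}\in\Sl_2(F)$; for instance $B_{\eta}^{-1}=\begin{pmatrix}1&\gamma\\0&1\end{pmatrix}$ with $\gamma\notin\mathcal{O}_D$ has $\mathfrak{b}=\mathcal{O}_D$ but $(\gamma,1)\neq\mathcal{O}_D$. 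As a consequence, your asserted equivalence ``the conjugate lies in $\Sl_2(\mathcal{O}_D)$ iff $u\in\mathcal{O}_D^*$ and $b\in\mathfrak{b}_i^{-2}$'' is not correct: for $u\neq\pm1$ the admissible $b$ form a coset of $\mathfrak{b}_i^{-2}$ shifted by terms involving $(u-u^{-1})\gamma_i$, $(u-u^{-1})\delta_i$, which a priori could be empty, so the surjectivity of the Levi map $\Gamma(D)_{\eta_i}\to\mathcal{O}_D^*$ does not follow from what you wrote. This is repairable: either choose $B_{\eta_i}$ with second column in $\mathfrak{b}_i^{-1}$ (possible precisely because $\mathfrak{b}_i\mathfrak{b}_i^{-1}=\mathcal{O}_D$, and the index $[\Gamma(D)_{\eta_i}:\Gamma(\aL)_{\eta_i}]$ is independent of the choice of $B_{\eta_i}$), or argue as the paper does: $\pm1$ always lift via $\pm I$, and units other than $\pm1$ exist only for $D=1,3$, where the class number is one, there is a single cusp and $B_\eta$ may be taken in $\Sl_2(\mathcal{O}_D)$. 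A second, smaller imprecision: for $\gamma\in\Gamma(\aL)_{\eta_i}$ with Levi eigenvalue $u$ one gets from $\Tr\gamma\in 2+\aL$ and the characteristic polynomial only $(u-1)^2\in\aL$, not $u\equiv1$ modulo $\aL$; since $\mathcal{O}_D^*$ is finite this still forces $u=1$ for $N(\aL)$ large, so your conclusion stands, but the identification of the Levi part of $\Gamma(\aL)_{\eta_i}$ with the kernel of $\mathcal{O}_D^*\to(\mathcal{O}_D/\aL)^*$ needs this justification (the paper's trace argument serves exactly this purpose).
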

\begin{proof}
The group $P$ 
is defined over $F$ and we let $P(F)$ be its $F$-valued points. 
If $\lambda\in\C$ is an
eigenvalue of an element
$\gamma\in\Gamma(D)_{\eta}$, then, since 
$B_\eta\gamma B_\eta^{-1}\subset P(F)$, also $\lambda^{-1}$ is an eigenvalue
and thus one has $\lambda,\lambda^{-1}\in F$ and 
since $\mathcal{O}_D$ is integrally closed, one has 
$\lambda\in\mathcal{O}_D^*$. Moreover, if $\mathcal{O}_D^*\neq\{\pm 1\}$, then 
$D=1,3$ and in this case the class number is one and so $\mathcal{O}_D$ has only
one cusp, so in this case one can assume that 
$B_\eta\in\Sl_2(\mathcal{O}_D)$. Thus in any case one obtains
\begin{align}\label{eqB}
B_\eta\Gamma(D)_\eta B_\eta^{-1}=J(B_\eta\Gamma(D)_\eta B_\eta^{-1}\cap N),\:
J\in\left\{\begin{pmatrix}\alpha&0\\ 0&\alpha^{-1}
\end{pmatrix},\alpha\in\mathcal{O}_D^*\right\}. 
\end{align}
Assume that $D\neq 1,3$. Then, since for every element
$\gamma\in\Gamma(\aL)$ one has $\Tr(\gamma)\in
2+\aL$ and since $\Tr(g)=-2$ for every element $g\in -N$, it
follows 
that for  $-4\notin \aL$, i.e. for $N(\aL)$ sufficiently large, one has $B_\eta
\Gamma(\aL)_\eta B_\eta^{-1}\subset N$. If $D=1,3$, then since $\Gamma(\aL)$ is 
a normal subgroup of $\Gamma(D)$ one has $B_\eta \Gamma(\aL)_\eta
B_\eta^{-1}=\Gamma(\aL)_\infty$ and 
it follows from \eqref{eqB} that $\Gamma(\aL)_\infty\subset N$ for $N(\aL)$
sufficiently large. 

Now for $\eta\in\mathbb{P}^1(F)$, $B_\eta$ as above,
write $B_\eta=\begin{pmatrix}\alpha &\beta \\ \gamma &\delta\end{pmatrix}\in
\Sl_2(F)$ and let $\mathfrak{u}$ be the $\mathcal{O}_D$-module 
generated by $\gamma$ and $\delta$. Then one has 
\begin{align*}
B_\eta\Gamma(D)_\eta B_\eta^{-1}\cap N=\left\{\begin{pmatrix}1&\omega\\
0&1\end{pmatrix};\: \omega\in \mathfrak{u}^{-2}\right\};\:
B_\eta\Gamma(\aL)_{\eta}B_\eta^{-1}\cap N=\left\{\begin{pmatrix}1&\omega'\\
0&1\end{pmatrix};\: \omega'\in \aL\mathfrak{u}^{-2}\right\},
\end{align*}
where the first equality is proved in \cite[Chapter 8.2, Lemma 2.2]{EGM} and
where the second equality
can be proved using the same arguments. 
Thus one has 
\begin{align*}
[B_\eta\Gamma(D)_\eta B_\eta^{-1}\cap N:B_\eta\Gamma(\aL)_\eta B_\eta^{-1}\cap
N]=N(\aL).
\end{align*}
Thus by \eqref{eqB}, for each $\eta\in \mathbb{P}^1(F)$ and $N(\aL)$ suffciently
large 
one has $[\Gamma(D)_\eta:\Gamma(\aL)_\eta]=\#(\mathcal{O}_D^*)N(\aL)$ and so, if
$\eta_1,\dots,\eta_{d_F}$ denote fixed representatives of
$\Gamma(D)\backslash \mathbb{P}^1(F)$ one obtains 
\begin{align*}
\kappa(\Gamma(\aL))=\#(\Gamma(\aL)\backslash
\mathbb{P}^1(F))=\#\left(\bigsqcup_{i=1}^{d_F}
\Gamma(\aL)\backslash\Gamma(D)/\Gamma(D)_{
\eta_i}\right)=d_F\frac{[\Gamma(D):\Gamma(\aL)]}{\#(\mathcal{O}_D^*) N(\aL)}.
\end{align*}
\end{proof}

We shall now describe the lattices $L(m)$ in $V_\R(m)$. Let $e_1$, $e_2$ be the 
standard basis of $\C^2$. Then if we realize $V(m)$ as the space of homogeneous
polynomials
in $e_1$ and $e_2$ of degree $2m$, a complex basis of $V(m)$ is given by
$\{e_1^{2m-i}e_2^{i},i=0,\dots,2m\}$. Moreover, for each
$g\in\Sl_2(\mathcal{O}_D)$ 
the matrix representing $\rho(m)(g)$ with respect to this basis has entries in
$\mathcal{O}_D$. 
Thus if we let $L(m)$ be the $\Z$-module spanned
by $\{e_1^{2m-i}e_2^{i},i=0,\dots,2m\}$ and
$\{\sqrt{-D}e_1^{2m-i}e_2^{i},i=0,\dots,2m\}$ for $D\equiv 1,2$ modulo 4 resp. 
$\{\frac{1+\sqrt{-D}}{2}e_1^{2m-i}e_2^{i},i=0,\dots,2m\}$ for $D\equiv 3$
modulo
4 , 
then $L(m)$ is a lattice in $V_\R(m)$ which is preserved by
$\rho_\R(m)(\Gamma(D))$. We shall denote the associated representation of
$\Gamma(D)$ on 
$L(m)$ by $\rho_\Z(m)$. In order to make the basis considered by Menal-Ferrer
and Porti into a 
basis consisting of integral elements, we need the following lemma.

\begin{lem}\label{Leminv}
Let $B_\eta\in\Sl_2(F)$ and let $P_\eta:=B_\eta^{-1}PB_\eta$, where $P=MAN$ is
the standard parabolic subgroup 
of $\Sl_2(\C)$ as above. Let $N_\eta:=B_\eta^{-1}NB_\eta$. Then there exist 
vectors $\omega(m)$, $\omega'(m)\in
L(m)$,  
which are fixed by $\rho_{\Z}(m)\left(\pm (\Gamma(D)\cap N_\eta)\right)$ and
which are linearly independent over $\R$. 
\end{lem}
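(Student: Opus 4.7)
The plan is to exhibit the fixed vectors explicitly as (scaled) $(2m)$-th powers of vectors in $\C^2$ coming from the first column of $B_\eta^{-1}$, and then clear denominators to land in $L(m)$.

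First, the space of $N$-fixed vectors in $V(m)=\Symm^{2m}\C^2$ is the one-dimensional line spanned by $e_1^{2m}$, since $N$ consists of upper-triangular unipotent matrices with $e_1$ as its only (up to scalar) joint eigenvector. Writing $B_\eta^{-1}=\begin{pmatrix} p & q\\ r & s\end{pmatrix}\in\Sl_2(F)$ and conjugating, the space of $N_\eta$-fixed vectors is therefore one-dimensional and spanned by $\rho(m)(B_\eta^{-1})e_1^{2m}=(pe_1+re_2)^{2m}$, which is nonzero since $(p,r)$ is the first column of the invertible matrix $B_\eta^{-1}$.

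Next, to land inside $L(m)$, I would pick $c\in\Z\setminus\{0\}$ such that $cp,cr\in\mathcal{O}_D$ (possible because $F$ is the fraction field of $\mathcal{O}_D$ and only two elements must be rationalized) and set
\begin{align*}
\omega(m):=(cpe_1+cre_2)^{2m}=\sum_{i=0}^{2m}\binom{2m}{i}(cp)^{2m-i}(cr)^{i}\,e_1^{2m-i}e_2^{i}.
\end{align*}
All coefficients then lie in $\mathcal{O}_D$, and since $L(m)$ is by construction the $\mathcal{O}_D$-span (viewed as a $\Z$-module) of the monomial basis $\{e_1^{2m-i}e_2^i\}_{i=0}^{2m}$, this places $\omega(m)\in L(m)$. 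Being a nonzero scalar multiple of an $N_\eta$-invariant vector, $\omega(m)$ is automatically fixed by $\rho_\Z(m)(\Gamma(D)\cap N_\eta)$. Since $\rho(m)(-I)=(-1)^{2m}\Id=\Id$, the element $-I$ acts trivially on $V(m)$, so fixing by $\pm(\Gamma(D)\cap N_\eta)$ is equivalent to fixing by $\Gamma(D)\cap N_\eta$.

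For the second vector I take $\omega'(m):=\sqrt{-D}\,\omega(m)$, which again lies in $L(m)$: in both cases $D\equiv 1,2\pmod 4$ and $D\equiv 3\pmod 4$ one has $\sqrt{-D}\in\mathcal{O}_D$ (use $\sqrt{-D}=2\cdot\tfrac{1+\sqrt{-D}}{2}-1$ in the latter case), and $L(m)$ is stable under multiplication by $\mathcal{O}_D$. Because $\omega(m)\neq 0$ and $\sqrt{-D}\notin\R$, the vectors $\omega(m)$ and $\omega'(m)$ are linearly independent over $\R$; and $\omega'(m)$ is still fixed because $\rho_\Z(m)$ is the restriction of the $\C$-linear representation $\rho(m)$. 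The argument is essentially formal, the only mildly fiddly point being the denominator-clearing step and the small case distinction in the description of $\mathcal{O}_D$; neither presents a real obstacle.
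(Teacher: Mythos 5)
Your proof is correct and follows essentially the same route as the paper: take the explicit $N$-fixed vector $e_1^{2m}$ (and its $\sqrt{-D}$-multiple), transport it by $\rho(m)(B_\eta^{-1})$ to get $N_\eta$-fixed vectors, and clear denominators to land in $L(m)$. Your version just clears denominators at the level of $\C^2$ before taking the $2m$-th power and uses the $\mathcal{O}_D$-module structure of $L(m)$ for the second vector, which is a harmless variant of the paper's argument that the matrix of $\rho_\R(m)(B_\eta^{-1})$ in a $\Z$-basis of $L(m)$ has rational entries.
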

\begin{proof}
The vectors $\omega(m):=e_1^{2m}$, $\omega'(m):=\sqrt{-D}e_1^{2m}$ for $D\equiv
1,2$ modulo 4 resp. $\omega'(m):=\frac{1+\sqrt{-D}}{2}e_1^{2m}$ for $D\equiv 3$
modulo 4 
belong to $L(m)$, are linearly
independent
over $\R$ and are fixed
by $\rho_{\Z}(m)\left(\pm (\Gamma(D)\cap N)\right)$. The matrix 
representing 
$\rho_\R(m)(B_\eta^{-1})$ with respect to a base of $V_\R(m)$ coming from a 
$\Z$-base
of $L(m)$ has entries in $\Q$. Thus multiplying
$\rho_\R(m)(B_\eta^{-1})\omega(m)$,
$\rho_\R(m)(B_\eta^{-1})\omega'(m)$ by the denominator of this matrix 
gives 
the desired vectors for $N_\eta$. 
\end{proof}

\section{Proof of the main results}\label{lastsec}
In this section we prove our main results. We start with the 0-th homology
group. 
This group is pure torsion. Moreover, for sequences of congruence
subgroups the asymptotic behaviour of its size was estimated directly by
Raimbault. 

\begin{prop}\label{PropHzero}
Let $\Gamma_i$ be a sequence of
congruence subgroups of $\Gamma(D)$ 
such that $\lim_{i\to\infty}[\Gamma(D):\Gamma_i]=\infty$. Then for
every $m\in\mathbb{N}$ 
with $m\geq 1$ one has 
\begin{align*}
\lim_{i\to\infty}\frac{\log|H_0(\Gamma_i;L(m))|}{[\Gamma(D):\Gamma_i)]
}=0.
\end{align*}
\end{prop}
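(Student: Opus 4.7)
The plan is to bound $|H_0(\Gamma_i;L(m))|$ polynomially in $P_i:=[\Gamma(D):\Gamma_i]$, which immediately yields the proposition since $P_i\to\infty$.

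First I would establish that $L(m)_{\Gamma_i}=H_0(\Gamma_i;L(m))$ is a finite abelian group. Since $\Gamma_i$ has finite covolume in $\Sl_2(\C)$, it is Zariski-dense by Borel density, and $\rho(m)$ has no non-zero $\Sl_2(\C)$-invariant vector for $m\geq 1$, so $V_\R(m)^{\Gamma_i}=0$. The duality of invariants and coinvariants over a field then gives $V_\R(m)_{\Gamma_i}=0$, and since $L(m)\otimes_\Z\R=V_\R(m)$, it follows that $L(m)_{\Gamma_i}$ is pure torsion, hence finite.

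Next I produce explicit elements of $\Gamma_i$: for each $g\in\Gamma(D)$ the cosets $g^k\Gamma_i$ are periodic of period at most $P_i$, so $g^k\in\Gamma_i$ for some $k\leq P_i$. Applying this to $\bigl(\begin{smallmatrix}1&1\\0&1\end{smallmatrix}\bigr)$ and $\bigl(\begin{smallmatrix}1&0\\1&1\end{smallmatrix}\bigr)$ and letting $N$ be the least common multiple of the two exponents obtained, we have $N\leq P_i^2$ and the elements $n_N:=\bigl(\begin{smallmatrix}1&N\\0&1\end{smallmatrix}\bigr)$, $m_N:=\bigl(\begin{smallmatrix}1&0\\N&1\end{smallmatrix}\bigr)$ in $\Gamma_i$. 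Let $M\subset L(m)$ be the subgroup generated by $(n_N-1)L(m)$ and $(m_N-1)L(m)$; since $\rho(m)$ is $\C$-linear, the operators $\rho(m)(n_N)-1$ and $\rho(m)(m_N)-1$ commute with $\mathcal O_D$-multiplication, making $M$ an $\mathcal O_D$-submodule. The heart of the argument is the claim that $(2m)!\,Nv_k\in M$ for every $k=0,\ldots,2m$, where $v_i:=e_1^{2m-i}e_2^i$ is the $\mathcal O_D$-basis of $L(m)$. Using the formulas
\[
(n_N-1)v_i=\sum_{j=1}^i\binom{i}{j}N^jv_{i-j},\qquad (m_N-1)v_i=\sum_{j=1}^{2m-i}\binom{2m-i}{j}N^jv_{i+j},
\]
the boundary cases $k=0$ and $k=2m$ follow from $(n_N-1)v_1=Nv_0$ and $(m_N-1)v_{2m-1}=Nv_{2m}$. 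For $0<k<2m$ one argues by induction on $k$ with the stronger hypothesis $(j+1)!\,Nv_j\in M$ for all $j<k$; the identity
\[
(k+1)Nv_k=(n_N-1)v_{k+1}-\sum_{j=2}^{k+1}\binom{k+1}{j}N^jv_{k+1-j}
\]
combined with $N^jv_{k+1-j}=N^{j-1}\cdot Nv_{k+1-j}$ and $\mathcal O_D$-closure of $M$ shows that $(k+2-j)!\,N^jv_{k+1-j}\in M$ for each $j\geq 2$; since $(k+2-j)!\mid k!$, multiplying the identity by $k!$ puts every term on the right into $M$, yielding $(k+1)!\,Nv_k\in M$. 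As $(k+1)!\mid(2m)!$ we conclude $(2m)!\,Nv_k\in M$; $\mathcal O_D$-closure upgrades this to $(2m)!\,N\,L(m)\subset M$, whence $|L(m)/M|\leq((2m)!\,N)^{2(2m+1)}$ and
\[
\log|H_0(\Gamma_i;L(m))|\leq 2(2m+1)\bigl(\log(2m)!+2\log P_i\bigr)=O_m(\log P_i),
\]
proving the proposition.

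The most delicate point will be organizing the two-ended induction: $n_N$ alone reaches only $v_0,\ldots,v_{2m-1}$, so $m_N$ is essential for $v_{2m}$. The careful bookkeeping of the inductive multipliers $(k+1)!$ is also critical, so that the higher-order coefficients $N^j$ with $j\geq 2$ in the expansion of $(n_N-1)v_{k+1}$ can be absorbed into $M$ via its $\mathcal O_D$-module structure.
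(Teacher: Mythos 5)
Your argument is correct, and it is genuinely different in character from what the paper does: the paper does not prove Proposition \ref{PropHzero} at all, but simply quotes Raimbault's thesis (\cite[Lemma 6.11]{Ra}), so your proposal supplies a self-contained elementary proof where the paper defers to the literature. The mechanism you use — pigeonhole on the $P_i:=[\Gamma(D):\Gamma_i]$ cosets to produce unipotents $\bigl(\begin{smallmatrix}1&N\\0&1\end{smallmatrix}\bigr),\bigl(\begin{smallmatrix}1&0\\N&1\end{smallmatrix}\bigr)\in\Gamma_i$ with $N\leq P_i^2$, and then the two-ended induction on the monomial basis $v_k=e_1^{2m-k}e_2^k$ showing $(2m)!\,N\,L(m)\subset M$ — checks out: the expansion $(n_N-1)v_{k+1}=\sum_{j\geq 1}\binom{k+1}{j}N^jv_{k+1-j}$ is the correct action for the paper's realization of $V(m)$ as polynomials in $e_1,e_2$, the $\mathcal{O}_D$-module closure of $M$ is legitimate because $\rho(m)(g)$ is $\C$-linear and $L(m)$ is the $\mathcal{O}_D$-span of the $v_k$, and the factorial bookkeeping ($(k+2-j)!\mid k!$, $(k+1)!\mid(2m)!$) is sound, yielding $|H_0(\Gamma_i;L(m))|\leq((2m)!\,N)^{2(2m+1)}$ and hence $\log|H_0|=O_m(\log P_i)$, which suffices. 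Two remarks on what each route buys: your proof is in the same spirit as Raimbault's (coinvariants killed by unipotents available in the subgroup), but it makes explicit that the congruence structure is irrelevant — any sequence of finite-index subgroups with $[\Gamma(D):\Gamma_i]\to\infty$ works — which is slightly more general than the statement; and your opening paragraph (Borel density plus invariant/coinvariant duality to get finiteness) is redundant, since finiteness already follows from the inclusion $(2m)!\,N\,L(m)\subset M\subset\sum_{\gamma\in\Gamma_i}(\gamma-1)L(m)$ that your main computation establishes.
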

\begin{proof}
This is proved by Raimbault, see \cite[Lemma 6.11]{Ra}.
\end{proof}

Now we have to estimate the volume factors with respect to the bases in the
integral 
homology given by Menal-Ferrer and Porti which occur in Lemma \ref{Lemreg}. For
the moment, we 
consider any $X$ of the form $X=\Gamma\backslash \mathbb{H}^3$, where $\Gamma$
is a
discrete, torsion-free
subgroup of $\Gamma(D)$ of finite index. Let $\PX$ be the boundary 
of $\XX$ and let $\iota:\PX\to\XX$ be the inclusion. Let
$\pr:H_1(\XX;L(m))\to
H_1(\XX;L(m))_{free}$ 
be the projection. Recall that $H_2(\XX;L(m))$ is free. It follows from
Proposition \ref{PropMePo} that
$\iota_*H_2(\partial
\overline X;L(m))$ resp. 
$\pr(\iota_*H_1(\partial
\overline X;L(m))_{free})$ are lattices of finite index
$[H_2(\XX;L(m)):\iota_*H_2(\partial
\overline X;L(m))]$ resp. $[H_1(\XX;L(m))_{free}:\pr(\iota_*H_1(\partial
\overline X;L(m))_{free})]$ 
in  $H_2(\XX;L(m))$ resp. $H_1(\XX;L(m))_{free}$.
These indices can be estimated as follows.
\begin{lem}\label{Lemreg2}
One can estimate 
\begin{align*}
[H_2(\XX;L(m)):\iota_*H_2(\partial
\overline X;L(m))]\leq |H_0(\XX;L^*(m))_{tors}|.
\end{align*}
Moreover, one can estimate
\begin{align*}
[H_1(\XX;L(m))_{free}:\pr(\iota_*H_1(\partial
\overline X;L(m))_{free})]\leq |H_1(\XX;L^*(m))_{tors}|.
\end{align*}
\end{lem}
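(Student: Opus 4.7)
The plan is to bound both indices by the orders of finite groups which appear as the torsion subgroups of certain relative homology groups, and then identify these relative torsion groups with the right-hand sides of the asserted inequalities by combining Lefschetz duality with the universal-coefficient isomorphism of Lemma \ref{duality}.

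First I would consider the long exact homology sequence of the pair $(\XX,\PX)$ with coefficients in $L(m)$,
\begin{align*}
\cdots\to H_q(\PX;L(m))\xrightarrow{\iota_*}H_q(\XX;L(m))\to H_q(\XX,\PX;L(m))\to H_{q-1}(\PX;L(m))\to\cdots
\end{align*}
For $q=2$ the cokernel $C_2$ of $\iota_*\colon H_2(\PX;L(m))\to H_2(\XX;L(m))$ injects into $H_2(\XX,\PX;L(m))$; by Proposition \ref{PropMePo} the map $\iota_*$ is surjective after tensoring with $\C$, so $C_2$ is finitely generated torsion, hence finite, and therefore lands inside $H_2(\XX,\PX;L(m))_{tors}$. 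Since $\XX$ is a compact oriented $3$-manifold with boundary, Lefschetz duality gives
\begin{align*}
H_2(\XX,\PX;L(m))\cong H^1(\XX;L(m)),
\end{align*}
and applying Lemma \ref{duality} with $L$ replaced by $L^*(m)$ (and using $L^{**}(m)=L(m)$) yields $H^1(\XX;L(m))_{tors}\cong H_0(\XX;L^*(m))_{tors}$. Combining these isomorphisms gives $[H_2(\XX;L(m)):\iota_*H_2(\PX;L(m))]=|C_2|\le|H_0(\XX;L^*(m))_{tors}|$, which is the first inequality.

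For the second inequality the argument is the same in spirit but I need to account for the torsion in $H_1(\XX;L(m))$. Let $I:=\iota_*H_1(\PX;L(m))$ and let $C_1$ be the cokernel of $\iota_*$ on $H_1$. Again by Proposition \ref{PropMePo} the group $C_1$ is finite. The projection $\pr\colon H_1(\XX;L(m))\to H_1(\XX;L(m))_{free}$ has kernel $H_1(\XX;L(m))_{tors}$, so
\begin{align*}
H_1(\XX;L(m))_{free}/\pr(I)\;\cong\;H_1(\XX;L(m))/(I+H_1(\XX;L(m))_{tors}),
\end{align*}
which is a quotient of $C_1$; in particular $[H_1(\XX;L(m))_{free}:\pr(\iota_*H_1(\PX;L(m))_{free})]\le|C_1|$. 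Finally $C_1$ injects into $H_1(\XX,\PX;L(m))_{tors}$, and Lefschetz duality together with Lemma \ref{duality} (applied as before) identifies this group with $H_1(\XX;L^*(m))_{tors}$, completing the proof.

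The main subtlety, and the only real work, is verifying the version of Lefschetz duality needed here: $\XX$ is a compact oriented $3$-manifold with boundary, but $L(m)$ is a non-trivial local coefficient system, so one has to use the twisted cap-product isomorphism. Everything else is a bookkeeping application of the long exact sequence of the pair and the duality lemmas already established in this section. Note also that no non-triviality of any of these groups is claimed: all the bounds are upper bounds on indices of finite-index subgroups, and degenerate to trivial statements when the torsion groups on the right-hand sides happen to vanish.
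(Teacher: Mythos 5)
Your proof is correct and takes essentially the same route as the paper: the exact sequence of the pair $(\XX,\PX)$, finiteness of the cokernel of $\iota_*$ via Proposition \ref{PropMePo}, Poincar\'e--Lefschetz duality with local coefficients, and Lemma \ref{duality} applied to $L^*(m)$. The only difference is cosmetic: you spell out the degree-one case and the small reduction from $\pr(\iota_*H_1(\PX;L(m))_{free})$ to the cokernel, which the paper dismisses with ``can be proved in the same way.''
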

\begin{proof}
We prove the first estimate. The second estimate can be proved in the same way.
Since the sequence 
\begin{align*}
H_2(\PX;L(m))\to H_2(\XX;L(m))\to H_2(\XX,\PX;L(m)) 
\end{align*}
is exact and since $\iota_*H_2(\partial
\overline X;L(m))$ is a lattice of finite index in $H_2(\XX;L(m))$
by Proposition \ref{PropMePo}, the quotient
$H_2(\XX;L(m))/\iota_*H_2(\partial
\overline X;L(m))$ embeds into $H_2(\XX,\PX;L(m))_{tors}$. 
By Poincar\'e duality, \cite[page 223-224]{Wa} one has 
\begin{align*}
H_2(\XX,\PX;L(m))\cong H^1(\XX;L(m)).
\end{align*}
By Lemma \ref{duality} one has 
\begin{align*}
H^1(\XX;L(m))_{tors}\cong H_0(\XX;L(m)^*)_{tors} 
\end{align*}
and the first estimate follows. 
\end{proof}

Now we come to the covering situation. We let $X_0$ be a fixed hyperbolic
manifold of the form
$X_0=\Gamma_0\backslash\mathbb{H}^3$, 
where $\Gamma_0$ is a torsion-free subgroup of $\Gamma(D)$ of finite index. 
We let $\kappa(X_0)$ be the number of cusps of $X_0$ and we let 
$T_{0,1},\dots,T_{0,\kappa(X_0)}$ be the boundary components of $\XX_0$. 
For each
$k=1,\dots,\kappa(X_0)$ we fix, according to Lemma \ref{Leminv}, 
vectors
$\omega_{0,k}^1(m)$, $\omega_{0,k}^2(m)$, in $L(m)$  
which are linearly independent over $\R$ and fixed
by $\rho_\R(m)(\pi_1(T_{0,k}))$. We let 
$X$ be a finite covering of $X_0$. We let 
$\kappa(X)$ be the number of cusps of $X$ and we let $T_j$,
$j=1,\dots,\kappa(X)$ be 
the boundary components of $\XX$. Each $T_{j}$ covers a 
single boundary  component $T_{0,k(j)}$, $1\leq
k(j)\leq \kappa(X_0)$ of $\XX_0$. We let $[T_{0,k(j)}:T_j]$ be the index of this
covering. Moreover, we let
$\omega^1_j(m):=\omega^1_{0,k(j)}(m)$, $\omega^2_j(m):=\omega^2_{0,k(j)}(m)$.

\begin{lem}\label{LemReg2}
Let $m\geq 1$. There exists a constant $C>0$ such that for each hyperbolic
manifold $X$
which 
is a finite covering of $X_0$ and each $j=1,\dots,\kappa(X)$, there exist
non-trivial
cycles $\theta_{j}\in H_1(T_j;\Z)$
such that the free $\Z$-submodule 
\begin{align*}
&\mathcal{M}_{\XX}(m):=\bigoplus_{j=1}^{\kappa(X)}\Z
\pr \iota_*(\theta_{j}\otimes\omega^1_{j}(m))\oplus \Z
\pr \iota_*(\theta_{j}\otimes\omega^2_{j}(m))
\end{align*}
of $H_1(\XX;L(m))_{free}$ 
satisfies
\begin{align*}
[\pr \iota_*(H_1(\PX;L(m))):\mathcal{M}_{\XX}(m)]\leq
C^{\kappa(X)}\prod_{j=1}^{\kappa(X)}[T_{0,k(j)}:T_{j}],
\end{align*}
where $\pr: H_1(\XX;L(m)\to H_1(\XX;L(m))_{free}$ denotes the projection onto
the free part in the homology.
\end{lem}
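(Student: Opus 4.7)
The plan is to obtain the bound cusp by cusp and then multiply. Fix a boundary torus $T_j \subset \PX$ with $d_j := [T_{0,k(j)} : T_j]$. Since $\pi_1(T_j) \subset \pi_1(T_{0,k(j)})$ are both Zariski-dense in the unipotent radical $N_{\eta_{k(j)}}$, their invariants in $L(m)$ coincide: $\Lambda_{k(j)} := L(m)^{\pi_1(T_j)} = L(m)^{\pi_1(T_{0,k(j)})}$ is a rank-$2$ $\Z$-lattice, and $\omega_j^r = \omega_{0,k(j)}^r$ for $r=1,2$ generate a sublattice of index at most $C_0 := \max_k [L(m)^{\pi_1(T_{0,k})} : \Z\omega_{0,k}^1 \oplus \Z\omega_{0,k}^2]$, a constant depending only on $X_0$ and $m$.

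A Koszul-complex computation for $\Z^2$ acting on $L(m)$ via the unipotent representation $\rho_\Z(m)$, combined with the complex dimension count in Proposition \ref{PropMePo}, shows that $\pr\iota_* H_1(T_j; L(m))$ has $\Z$-rank $2$ and that any class outside the image of the tensor map $\mu_j : H_1(T_j;\Z) \otimes_\Z \Lambda_{k(j)} \to H_1(T_j; L(m))$ maps to torsion in $H_1(\XX; L(m))$ and is killed by $\pr$. Identifying $V_\R(m)^{\pi_1(T_j)}$ with $\C$ via the highest-weight vector $e_1^{2m}$, the composition $\pr\iota_* \circ \mu_j$ becomes $\theta \otimes \omega \mapsto \lambda(\theta) \omega$, where $\lambda : \pi_1(T_j) \hookrightarrow N_{\eta_{k(j)}} \cong \C$ is the cusp period; thus modulo torsion, $\pr\iota_* H_1(T_j; L(m))$ is identified with the rank-$2$ $\Z$-lattice $\lambda(\pi_1(T_j)) \cdot \Lambda_{k(j)}$ inside $F = \Q(\sqrt{-D})$, and $\pr\iota_*(\Z\theta_j \otimes (\Z\omega_j^1 \oplus \Z\omega_j^2))$ with $\lambda(\theta_j)(\Z\omega_j^1 \oplus \Z\omega_j^2)$.

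Choose $\theta_j$ to be a shortest nonzero element of $\pi_1(T_j)$ (automatically primitive), with a Gauss-reduced complement $\theta_j'$, and set $\tau_j := \lambda(\theta_j')/\lambda(\theta_j) \in F$. The per-cusp index then factors as $[\Lambda_{k(j)} + \tau_j \Lambda_{k(j)} : \Lambda_{k(j)}] \cdot [\Lambda_{k(j)} : \Z\omega_j^1 \oplus \Z\omega_j^2]$, with the second factor bounded by $C_0$. The first factor equals the norm of the denominator of $\tau_j$ as a fractional $\mathcal{O}_D$-ideal, which by the short-vector choice of $\theta_j$ and standard arithmetic of sublattices of $\mathcal{O}_D$ is bounded by $c \cdot d_j$ for $c = c(X_0,m)$. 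Multiplying over the $\kappa(X)$ cusps yields the claimed estimate with $C := C_0 c$. The main obstacle is the lattice bound $[\Lambda_{k(j)} + \tau_j \Lambda_{k(j)} : \Lambda_{k(j)}] \leq c \cdot d_j$, which requires combining Minkowski-type bounds on the sublattice $\pi_1(T_j) \subset \pi_1(T_{0,k(j)})$ with the algebra of fractional ideals in the imaginary quadratic field $F$; without the short-vector choice of $\theta_j$, the naive estimate could be as poor as $d_j^2$, which would ruin the subsequent asymptotics.
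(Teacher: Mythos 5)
Your key intermediate claim is genuinely nice and essentially correct at the rational level: on the torus itself one has the relation $\lambda(\alpha)\,[\beta\otimes\omega]=\lambda(\beta)\,[\alpha\otimes\omega]$ in $H_1(T_j;V(m))$ (this follows from the Koszul computation, since the kernel of the tensor map is spanned by $\alpha\otimes\lambda(\beta)\omega-\beta\otimes\lambda(\alpha)\omega$), so the image of the \emph{tensor classes} is indeed modelled by $\lambda(\pi_1(T_j))\cdot\Lambda_{k(j)}$ inside an $F$-line. The gap is the next step, where you identify this cusp-local lattice with the full image $\pr\,\iota_*\bigl(H_1(T_j;L(m))\bigr)$ modulo torsion. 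The sentence "any class outside the image of $\mu_j$ maps to torsion and is killed by $\pr$" cannot be literally true (the preimage of the torsion subgroup is a subgroup, so it cannot contain the complement of a proper subgroup), and the correct rational statement -- that $\iota_*(H_1(T_j;V_\R(m)))$ is spanned by the tensor classes, which is the Menal-Ferrer--Porti input the paper also quotes -- does not give what you need integrally. The full integral image can be a finite-index \emph{overlattice} of the tensor-class lattice: an integral cycle $z$ on $T_j$ decomposes as $z=w+k$ with $k$ in the kernel of $\iota_*|_{H_1(T_j;V(m))}$ and $w$ a rational, generally non-integral, combination of tensor classes, so $\pr\,\iota_*z$ may produce points with denominators relative to $\lambda(\pi_1(T_j))\cdot\Lambda_{k(j)}$. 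How large these denominators are is governed by how the kernel of $\iota_*$ (a globally determined Lagrangian-type subspace, depending on the cover $X$, not just on the cusp) sits with respect to the integral structure. Your argument gives no bound on this index that is uniform over all finite covers of $X_0$, yet this is exactly the quantity the lemma must control; the dimension count from Proposition \ref{PropMePo} plus a Koszul computation cannot supply it.

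This is precisely where the paper's proof does something different: it never tries to compute the image lattice cusp-locally. Writing the two chosen tensor classes in terms of a $\Z$-basis of $\pr\,\iota_*(H_1(T_j;L(m))_{free})$ gives an integral matrix $B(\theta_j)$ whose determinant is the sought index, and the covering map is used to compare with the fixed base manifold: $A_{k(j)}(\pi_*\theta_j)=D(j)B(\theta_j)$ with $D(j)$ integral and invertible, hence $|\det B(\theta_j)|\le|\det A_{k(j)}(\pi_*\theta_j)|$, and on $X_0$ the entries of $A_k(\theta)$ are linear in $\theta$, so Minkowski's choice $|\pi_*\theta_j|\le 2\sqrt{[T_{0,k(j)}:T_j]}$ yields the bound $4C[T_{0,k(j)}:T_j]$. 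In other words, the uncontrolled global index on the cover is absorbed into $|\det D(j)|\ge 1$ and transferred to a quantity on $X_0$ alone. Your short-vector choice of $\theta_j$ and the ideal-theoretic estimate of the denominator of $\tau_j$ play the role of the Minkowski step (and would need more detail than "standard arithmetic of sublattices of $\mathcal{O}_D$"), but they only address the discrepancy caused by replacing $\pi_1(T_j)$ by the single cycle $\theta_j$; they do not address the discrepancy between the tensor-class lattice and the true image lattice, which is the heart of Lemma \ref{LemReg2}. Without an analogue of the comparison with $X_0$, the proposed proof does not establish the stated bound.
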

\begin{proof}
We consider each $H_1(T_{0,k};\Z)$ as a lattice in $H_1(T_{0,k};\R)$ and fix an
inner product defined by a $\Z$-basis of $H_1(T_{0,k};\Z)$. 
The arguments of Menal-Ferrer and Porti \cite[page 17]{MePo} easily imply that
$\iota_*(H_1(T_{0,k};V_\R(m)))$ is a 2-dimensional real vector space 
generated by $\iota_*(\theta\otimes\omega^1_{0,k}(m))$,
$\iota_*(\theta\otimes\omega^2_{0,k}(m))$ for 
each non-zero $\theta\in H_1(T_{0,k};\Z)$.
Let
$v_1(k),v_2(k)\in H_1(T_{0,k};L(m))_{free}$,
such that $\pr \iota_*(H_1(T_{0,k};L(m))_{free})$ is the free $\Z$-module
generated
by $\iota_*(v_1(k)), \iota_*(v_2(k))$. 
Thus for every $\theta\in H_1(T_{0,k};\Z)$ which is not zero there exists 
an integral matrix $A_k(\theta)=(a^k_{\mu,\nu}(\theta))$ such that
in 
the free homology one has 
\begin{align*}
&\iota_*(\theta\otimes\omega^1_{0,k}(m))=a^k_{1,1}(\theta)\iota_*(v_1(k))+a^k_{2
,1}
(\theta)\iota_*(v_2(k)),\\
&\iota_*(\theta\otimes\omega^2_{0,k}(m))=a^k_{1,2}(\theta)\iota_*(v_1(k))+a^k_{2
,
2} (\theta)\iota_*(v_2(k)).
\end{align*}  
Moreover, the matrix $A_k(\theta)$ is invertible over $\R$
for each non-zero $\theta$. 
Since the matrix entries of $A_k(\theta)$ are linear functions of $\theta$,
there exists a constant $C>0$ such that for all $k=1,\dots,\kappa(X_0)$ and all
$\theta\in H_1(T_{0,k};\Z)$ one
has 
\begin{align}\label{estdet}
|\det A_k(\theta)|\leq C|\theta|^2. 
\end{align}
Now fix a boundary component $T_{j}$ of $\PX$. Let $\pi:X\to X_0$ denote 
the covering. Then $\pi$ induces a covering
$\pi:T_{j}\to T_{0,k(j)}$ which in turn induces 
a homomorphism $\pi_*:H_1(T_j;\Z)\to H_1(T_{0,k(j)};\Z)$. By Minkowski's
lattice-point theorem  
there exists  a non-trivial 
$\theta_{j}\in H_1(T_{j};\Z)$ such that 
\begin{align}\label{esttheta}
|\pi_*(\theta_{j})|\leq 2\sqrt{[T_{0,k(j)}:T_j]}.
\end{align}
Again there exist $w_1(j),w_2(j)\in H_1(T_{j};L(m))_{free}$ such that $\pr
\iota_*(w_1(j))$, $\pr \iota_*(w_2(j))$ form a $\Z$-basis of $\pr
\iota_*(H_1(T_{j};L(m))_{free})$. 
Thus in the free homology one has
\begin{align*}
&
\iota_*(\theta_{j}\otimes\omega^1_j(m))=b_{1,1}(\theta_j)\iota_*(w_1(j))+b_{2,1}
(\theta_j)\iota_*(w_2(j))\\
&\iota_*(\theta_{j}\otimes\omega^2_j(m))=b_{1,2}(\theta_j)\iota_*(w_1(j))+b_{2,2
}
(\theta_j)\iota_*(w_2(j)),
\end{align*} 
where the matrix $B(\theta_j)=(b_{\mu,\nu}(\theta_j))$ is integral. By the above
arguments, it is
invertible over $\R$. 

Now we consider 
the map $\pi_*:H_1(\XX,V_\R(m))\to H_1(\XX_0,V_\R(m))$ which restricts to 
a map $\pi_*:H_1(\XX,L(m))_{free}\to H_1(\XX_0,L(m))_{free}$. 
Then we have
\begin{align*}
&\pi_{*}\iota_*w_1(j)=d_{1,1}(j)\iota_*v_1(k(j))+d_{2,1}(j)\iota_*v_2(k(j)),\\
&\pi_{*}\iota_*w_2(j)=d_{2,1}(j)\iota_*v_1(k(j))+d_{2,2}(j)\iota_*v_2(k(j)),
\end{align*}
where $D(j)=(d_{\mu,\nu}(j))$ is an integral $2\times 2$ matrix. We have
\begin{align}\label{eqmat}
A_{k(j)}(\pi_*(\theta_{j}))=D(j)\cdot B(\theta_{j}).
\end{align}
Thus  $D(j)$ is  invertible over $\R$. 
Thus  we have using
\eqref{estdet}, \eqref{esttheta} and  \eqref{eqmat}:
\begin{align*}
&[\pr \iota_*(H_1(T_{j}:L(m))_{free}):\Z
\iota_*(\theta_j\otimes\omega^1_j(m))+\Z \iota_*(\theta_j\otimes\omega^2_j(m))]
=|\det(B(\theta_{j}))|\\ &\leq
|\det(A_{k(j)}(\pi_*\theta_{j}))|\leq 4 C[T_{0,k(j)}:T_{j}]
\end{align*}
and the Lemma follows. 
\end{proof}

Now to treat the image of $H_2(\PX;L(m))$ we will need the following Lemma.
\begin{lem}\label{Lemreg3}
If for each $j$ one chooses a
generator 
$\eta_j$ of $H_2(T_j;\Z)$, the free $\Z$-module 
\begin{align*}
\mathcal{N}_{\XX}(m):=\bigoplus_{j=1}^{\kappa(X)}\Z
\iota_*(\eta_{j}\otimes\omega^1_{j}(m))\oplus \Z
\iota_*(\eta_{j}\otimes\omega^2_{j}(m))
\end{align*}
of $H_2(\XX;L(m))$ satisfies
\begin{align*}
[\iota_*(H_2(\PX;L(m))):\mathcal{N}_{\XX}(m)]\leq
C^{\kappa(X)}\prod_{j=1}^{\kappa(X)}[T_{0,k(j)}:T_{j}].
\end{align*}
\end{lem}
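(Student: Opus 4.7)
My plan is to proceed in close analogy with the proof of Lemma \ref{LemReg2}, with one crucial simplification: since $\eta_j$ is the canonical generator of $H_2(T_j;\Z)\cong\Z$, no Minkowski step is required, and the covering degree $n_j:=[T_{0,k(j)}:T_j]$ enters solely through the identity $\pi_*(\eta_j)=n_j\eta_{0,k(j)}$ in top homology.

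As a preliminary reduction, $H_2(\XX;L(m))$ is free (because $\XX$ is homotopy-equivalent to a $2$-dimensional CW-complex), and by Proposition \ref{PropMePo} the $2$-dimensional real subspaces $\iota_*(H_2(T_j;V_\R(m)))$ for different boundary components are linearly independent in $H_2(\XX;V_\R(m))$. Hence
\begin{align*}
\iota_*(H_2(\PX;L(m)))=\bigoplus_{j=1}^{\kappa(X)}\iota_*(H_2(T_j;L(m))),
\end{align*}
the total index factors, and it suffices to prove a local bound
\begin{align*}
[\iota_*(H_2(T_j;L(m))):\Z\iota_*(\eta_j\otimes\omega^1_j(m))+\Z\iota_*(\eta_j\otimes\omega^2_j(m))]\leq Cn_j
\end{align*}
for each $j$, with $C$ depending only on $X_0$.

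To set this up, I would choose $\Z$-bases $v_1(k),v_2(k)$ of $\iota_*(H_2(T_{0,k};L(m)))$ and $w_1(j),w_2(j)$ of $\iota_*(H_2(T_j;L(m)))$, and expand $\iota_*(\eta_{0,k}\otimes\omega^i_{0,k}(m))$, $\iota_*(\eta_j\otimes\omega^i_j(m))$ and $\pi_*\iota_*(w_r(j))$ in these bases to obtain integer matrices $A_k$, $B(j)$, $D(j)$. The determinants $|\det A_k|$ are constants depending only on $X_0$ and $k$, and $|\det B(j)|$ is precisely the local index I need. Using $\omega^i_j(m)=\omega^i_{0,k(j)}(m)$ together with the projection formula applied to $\omega^i_{0,k(j)}(m)\in L(m)^{\pi_1(T_{0,k(j)})}$ (which pulls back to $\omega^i_j(m)$), one obtains $\pi_*\iota_*(\eta_j\otimes\omega^i_j(m))=n_j\iota_*(\eta_{0,k(j)}\otimes\omega^i_{0,k(j)}(m))$, and functoriality of $\iota_*$ yields the matrix equation $B(j)D(j)=n_jA_{k(j)}$.

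The heart of the argument, and the only real obstacle, is to establish the lower bound $|\det D(j)|\geq n_j$, since then taking determinants gives $|\det B(j)|\leq n_j|\det A_{k(j)}|\leq Cn_j$ with $C:=\max_k|\det A_k|$. I would obtain this via the Poincar\'e-duality isomorphism $H_2(T;L(m))\cong L(m)^{\pi_1(T)}$ valid for any closed oriented aspherical surface: under this identification $\pi_*$ corresponds to the corestriction (norm) map, which restricted to $L(m)^{\pi_1(T_{0,k(j)})}\subset L(m)^{\pi_1(T_j)}$ is simply multiplication by $n_j$. Moreover the real $\pm N_\eta$-invariants in $V_\R(m)$ form a fixed $2$-dimensional subspace regardless of the chosen finite-index subgroup, as is immediate from the explicit action of $N$ on $\Symm^{2m}\C^2$, so the inclusion $L(m)^{\pi_1(T_{0,k(j)})}\subset L(m)^{\pi_1(T_j)}$ is actually an equality. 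Thus $\pi_*$ acts as scalar multiplication by $n_j$ on a rank-$2$ lattice and $|\det D(j)|=n_j^2\geq n_j$ as required. Multiplying the local estimates over the $\kappa(X)$ boundary components gives the claimed bound $C^{\kappa(X)}\prod_j n_j$.
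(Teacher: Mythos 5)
Your proposal is correct, and its skeleton (bases $v_s(k)$, $w_r(j)$ of the image lattices, integer matrices $A_k$, $B(j)$, $D(j)$, the pushforward identity, and a determinant comparison) is exactly the transcription of the proof of Lemma \ref{LemReg2} that the paper intends when it says the proof is ``entirely analogous''. Where you genuinely depart from that template is at the point where the covering degree enters. In the $H_1$-case the paper only uses $|\det D(j)|\geq 1$ and gains the factor $[T_{0,k(j)}:T_j]$ from Minkowski's theorem applied to $\pi_*\theta_j$; the mechanical analogue in degree $2$, where the only available cycle is the generator $\eta_j$ with $\pi_*\eta_j=[T_{0,k(j)}:T_j]\,\eta_{0,k(j)}$, would give $|\det B(j)|\leq|\det(n_jA_{k(j)})|=n_j^2|\det A_{k(j)}|$, i.e.\ only the weaker bound $C^{\kappa(X)}\prod_j[T_{0,k(j)}:T_j]^2$ (which, incidentally, would still suffice for Proposition \ref{PropReg}). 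Your extra step --- the lower bound $|\det D(j)|\geq n_j$, in fact $|\det D(j)|=n_j^2$, obtained from Poincar\'e duality $H_2(T;L(m))\cong L(m)^{\pi_1(T)}$, the compatibility of $\pi_*$ with the transfer (norm) map, and the observation that $L(m)^{\pi_1(T_j)}=L(m)^{\pi_1(T_{0,k(j)})}$ because the $\rho(m)$-invariants of any nontrivial parabolic element already cut out the same complex line in $\Symm^{2m}\C^2$ --- is what recovers the bound as stated; indeed it shows more, namely that the local index equals the constant $[L(m)^{\pi_1(T_{0,k(j)})}:\Z\omega^1_{0,k(j)}(m)+\Z\omega^2_{0,k(j)}(m)]$, so the total index is bounded by $C^{\kappa(X)}$ with no covering factors at all. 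Two small points you should make explicit: $\iota_*$ is injective on $H_2(T_j;L(m))$ (this group is torsion-free and $\iota_*$ is injective on $H_2(T_j;V_\R(m))$ by Proposition \ref{PropMePo}), so that your matrix $D(j)$, defined via the image bases in $H_2(\XX;L(m))$ and $H_2(\XX_0;L(m))$, really is the matrix of the boundary-level pushforward; and the identity $\pi_*(\eta_j\otimes\omega)=n_j(\eta_{0,k(j)}\otimes\omega)$ should be justified for $\omega$ invariant under the larger group, e.g.\ by the same duality-plus-transfer argument, rather than by an unqualified appeal to the projection formula.
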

\begin{proof}
The proof is entirely analogous to the proof of the preceding Lemma.  
\end{proof}

Combining Lemma \ref{Lemreg2}, Lemma \ref{LemReg2} and Lemma \ref{Lemreg3}, we
can prove 
the following estimate for the volume factors. 

\begin{prop}\label{PropReg}
Let $\Gamma_0$ be a congruence subgroup of $\Gamma(D)$ and assume that
$\Gamma_0$
is torsion-free.
Let $\Gamma_i$ be a sequence of congruence subgroups of $\Gamma(D)$ contained in
$\Gamma_0$ such that
\begin{align*}
\lim_{i\to\infty}\frac{\kappa(\Gamma_i)\log[\Gamma_0:\Gamma_i]}{[
\Gamma_0:\Gamma_i]}=0.
\end{align*} 
Let $X_i:=\Gamma_i\backslash\mathbb{H}^3$. Let $T_{i,j}$,
$j=1,\dots,\kappa(\Gamma_i)$ denote the boundary components of
$X_i$. 
Then for each $m\in\mathbb{N}$ with $m\geq 1$, each $i$ and each $j$ one can
choose non-trivial cycles $\theta_{i,j}\in
H_1(T_{i,j};\Z)$ and 
vectors $\omega_{i,j}(m)$ fixed by
$\rho_{\Z}(m)(T_{i,j})$ such that
if  
for $\theta_i=\{\theta_{i,1},\dots,\theta_{i,\kappa(\Gamma_i)}\}$,
$\omega_i(m)=\{\omega_{i,1}(m),\dots,\omega_{i,\kappa(\Gamma_i)}(m)\}$ the set
$\mathcal{B}^\R(\theta_i,\omega_i(m))$ is 
the basis of $H_1(\XX_i;\rho_\R(m))$ as in \eqref{baseMePo} and
\eqref{BaseR}, one has
\begin{align*}
0\leq - \frac{\log\vol_{\mathcal{B}^\R(\theta_i,\omega_i(m))}
H_1(\XX_i;L(m))_{free}}{[\Gamma_0:\Gamma_i]}+a_i\leq
\frac{\log{|H_1(\Gamma_i;L^*(m))_{tors}|}}{[\Gamma_0:\Gamma_i]},
\end{align*}
where $a_i\in\R$ with $\lim_{i\to\infty}a_i=0$. Moreover, if for each $i$ and
each $j$ one 
chooses a generator $\eta_{i,j}$ of $
H_2(T_{i,j};\Z)$, then for   
$\eta_i=\{\eta_{i,1},\dots,\eta_{i,\kappa(\Gamma_i)}\}$,
and the basis 
$\mathcal{B}^\R(\eta_i,\omega_i(m))$  
of $H_2(\XX_i;\rho_\R(m))$ defined by \eqref{baseMePo} and
\eqref{BaseR}, one has
\begin{align*}
\lim_{i\to\infty}\frac{\log\vol_{\mathcal{B}^\R(\eta_i,\omega_i(m))}
H_2(\XX_i;L(m))}{[\Gamma_0:\Gamma_i]}=0.
\end{align*}
\end{prop}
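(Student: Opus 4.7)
The plan is to combine Lemmas~\ref{Lemreg2}, \ref{LemReg2} and \ref{Lemreg3} to express the two relevant covolumes as ratios of computable indices, then use the hypothesis of the proposition together with Proposition~\ref{PropHzero} to ensure the error terms vanish in the limit.

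I first fix choices. For each cusp $T_{0,k}$ of $\XX_0$, Lemma~\ref{Leminv} supplies vectors $\omega^1_{0,k}(m), \omega^2_{0,k}(m) \in L(m)$ that are fixed by $\rho_\Z(m)(\pi_1(T_{0,k}))$ and linearly independent over $\R$. For each boundary component $T_{i,j}$ of $\XX_i$ I set $\omega_{i,j}(m) := \omega^1_{0,k(j)}(m)$; since $\pi_1(T_{i,j})$ is contained in a conjugate of $\pi_1(T_{0,k(j)})$, this vector remains fixed. Because the peripheral subgroup is Zariski dense in the unipotent radical of the corresponding parabolic, the space of $\rho_\R(m)(\pi_1(T_{i,j}))$-fixed vectors is two dimensional over $\R$, so there is a real $2\times 2$ change-of-basis matrix expressing $\{\omega^1_{0,k(j)}(m), \omega^2_{0,k(j)}(m)\}$ in terms of $\{\omega_{i,j}(m), \sqrt{-1}\,\omega_{i,j}(m)\}$, whose determinant depends only on $k(j)$ and is therefore bounded in absolute value by some constant $D > 0$ depending only on $X_0$. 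I then invoke Lemma~\ref{LemReg2} to pick cycles $\theta_{i,j} \in H_1(T_{i,j}; \Z)$ such that the lattice $\mathcal{M}_{\XX_i}(m)$ defined there has controlled index in $\pr\iota_*(H_1(\PX_i; L(m)))$.

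The first estimate rests on the identity
\begin{align*}
\vol_{\mathcal{B}^\R(\theta_i,\omega_i(m))} H_1(\XX_i; L(m))_{free} = \frac{V_i}{N_i\, M_i},
\end{align*}
where $V_i := \vol_{\mathcal{B}^\R(\theta_i,\omega_i(m))} \mathcal{M}_{\XX_i}(m)$, $N_i := [\pr\iota_*(H_1(\PX_i; L(m))) : \mathcal{M}_{\XX_i}(m)]$ and $M_i := [H_1(\XX_i; L(m))_{free} : \pr\iota_*(H_1(\PX_i; L(m))_{free})]$. The block-diagonal structure of the change of basis (one $2\times 2$ block per cusp) gives $|\log V_i| \leq \kappa(\Gamma_i) \log D$; Lemma~\ref{LemReg2} together with $\prod_j[T_{0,k(j)}:T_{i,j}]\leq[\Gamma_0:\Gamma_i]^{\kappa(\Gamma_i)}$ gives $\log N_i \leq \kappa(\Gamma_i)(\log C + \log[\Gamma_0: \Gamma_i])$; and Lemma~\ref{Lemreg2} gives $0 \leq \log M_i \leq \log|H_1(\Gamma_i; L^*(m))_{tors}|$. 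Setting $a_i := (\log V_i - \log N_i)/[\Gamma_0: \Gamma_i]$, one obtains
\begin{align*}
-\frac{\log\vol_{\mathcal{B}^\R(\theta_i,\omega_i(m))} H_1(\XX_i; L(m))_{free}}{[\Gamma_0: \Gamma_i]} + a_i = \frac{\log M_i}{[\Gamma_0: \Gamma_i]},
\end{align*}
which lies in the required range, and $a_i \to 0$ follows immediately from the hypothesis \eqref{AssSeq}.

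The $H_2$ statement follows by an analogous scheme, using Lemma~\ref{Lemreg3} in place of Lemma~\ref{LemReg2} and the first estimate of Lemma~\ref{Lemreg2}, which now contributes the factor $|H_0(\Gamma_i; L^*(m))_{tors}|$. Since Raimbault's argument underlying Proposition~\ref{PropHzero} applies equally to the dual lattice $L^*(m)$, one has $\log|H_0(\Gamma_i; L^*(m))|/[\Gamma(D): \Gamma_i] \to 0$; combined with the bounds on the analogues of $V_i$ and $N_i$, this yields $\log\vol_{\mathcal{B}^\R(\eta_i,\omega_i(m))} H_2(\XX_i; L(m)) / [\Gamma_0: \Gamma_i] \to 0$. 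The main obstacle is the bookkeeping: one must correctly split the combined indices into a main torsion contribution and a negligible error $a_i$, and the hypothesis \eqref{AssSeq} is precisely calibrated so that the peripheral error from Lemma~\ref{LemReg2}, which can grow as fast as $\kappa(\Gamma_i)\log[\Gamma_0:\Gamma_i]$, becomes negligible after normalization.
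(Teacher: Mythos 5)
Your argument is correct and follows essentially the same route as the paper: you express the covolume via the multiplicativity of lattice indices through the chain $\mathcal{M}_{\XX_i}(m)\subset\pr\iota_*(H_1(\PX_i;L(m)))\subset H_1(\XX_i;L(m))_{free}$, bound the per-cusp change-of-basis determinants by a constant depending only on $X_0$, control the peripheral index by Lemma \ref{LemReg2} (resp. \ref{Lemreg3}) and the remaining index by Lemma \ref{Lemreg2}, and let hypothesis \eqref{AssSeq} together with Proposition \ref{PropHzero} (applied, as you rightly note, to $L^*(m)$) kill the error terms. The paper organizes the same computation via an auxiliary basis $\widetilde{\mathcal{B}}^\R(\theta_i,\omega_i^1(m),\omega_i^2(m))$ with base-change determinant $\tilde{C}^{\kappa(X_i)}$, but this is only a presentational difference.
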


\begin{proof}
We choose the $\theta_{i,j}\in H_1(T_{i,j};\Z)$ and the vectors
$\omega_{i,j}^{1}(m), \omega_{i,j}^2\in L(m)$ as in Lemma \ref{LemReg2}. We let
$\omega_{i,j}(m):=\omega_{i,j}^1(m)$.
Let 
\begin{align*}
\widetilde{\mathcal{B}}^\R(\theta_i,\omega_i^{1}(m),\omega_{i}^{2}
(m)):=\bigsqcup_{j=1}^{\kappa(X_i)}\{\iota_*(\omega_
{i,j}^1(m)\otimes\theta_{i,j}),\iota_*(\omega_{i,j}^2(m)\otimes\theta_{i,j})\}
\end{align*}
Then $\widetilde{\mathcal{B}}^\R(\theta_i,\omega_i^{1}(m),\omega_{i}^{2}
(m))$ is a basis of $H_1(X_i,V_\R(m))$ and if $M(i)$ is the matrix of 
base-change from 
$\widetilde{\mathcal{B}}^\R(\theta_i,\omega_i^{1}(m),\omega_{i}^{2}
(m))$ to $\mathcal{B}^\R(\theta_i,\omega_i(m))$ one has
$|\det{M(i)}|=\tilde{C}^{\kappa(\XX_i)}$, where $\tilde{C}\in\R^+$ is a constant
which is 
independent of $X_i$.  
Thus it suffices to estimate the term
$-\log\vol_{\widetilde{\mathcal{B}}^\R(\theta_i,\omega_i^{1}(m),\omega_{i}^{2}
(m))}
H_1(\XX_i;L(m))_{free}$. If $\mathcal{M}_{{\XX}_i}(m)$ is as in Lemma
\ref{LemReg2} one has 
\begin{align*}
\vol_{\widetilde{\mathcal{B}}^\R(\theta_i,\omega_i^{1}(m),\omega_{i}^{2}
(m))}
H_1(\XX_i;L(m))_{free}=\frac{1}{[H_1(\XX_i:L(m))_{free}:\mathcal{M}_{{\XX}_i}
(m)]
}.
\end{align*}
By Lemma \ref{Lemreg2} and Lemma \ref{LemReg2} one can
estimate 
\begin{align*}
1&\leq
[H_1(\XX_i:L(m))_{free}:\mathcal{M}_{{\XX}_i}(m)]\\=&[H_1(\XX_i:L(m))_{free}
:\pr(\iota_*(H_1(\PX_i;L(m))_{free}))][\pr(\iota_*(H_1(\PX_i;L(m))_{free}
)):\mathcal{M}_
{{\XX}_i}(m)]\\
&\leq |H_1(\XX_i;L^*(m))_{tors}|\cdot
C^{\kappa(\Gamma_i)}\prod_{j=1}^{\kappa(\Gamma_i)}[T_{0,m(j)}:T_{i,j}].
\end{align*}
Thus we can estimate
\begin{align*}
&0\leq
-\log\vol_{\widetilde{\mathcal{B}}^\R(\theta_i,\omega_i^{1}(m),\omega_{i}^{2}
(m))}
H_1(\XX_i;L(m))_{free}\\ &\leq
\log|H_1(\XX_i;L^*(m))_{tors}|+\kappa(\Gamma_i)\log{C}+\kappa(\Gamma_i)\log[
\Gamma(D):\Gamma_i]
\end{align*}
and the first estimate follows. Applying Proposition \ref{PropHzero} and Lemma
\ref{Lemreg3}, the second 
estimate can be proved in the same way. 
\end{proof}

Now we can prove Theorem \ref{Mainthrm}. We let the sequence
$X_i=\Gamma_i\backslash\mathbb{H}^3$ be 
as in Theorem \ref{Mainthrm} resp. the previous proposition. 
We let $V^*_\R(m)$ resp. $V^*(m)$ be the dual spaces of
$V_\R(m)$ 
resp. $V(m)$ and we let ${\check{\rho}}_{\R}(m):\Sl_2(\C)\to V^*_\R(m)$ resp.
$\check{\rho}(m):\Sl_2(\C)\to V^*(m)$ be the 
contragredient representation of $\rho_{\R}(m)$ resp. $\rho(m)$ . 
Then the representation $\check{\rho}(m)$, regarded as a 
real representation, is equivalent to $\check{\rho}_\R(m)$. Moreover,
$\check{\rho}(m)$ is self-contragredient. Thus, if for $\mu\in\{2,m\}$ we 
fix vectors $\omega_i(\mu)\in L(\mu)$ as in the previous proposition and if we
let
$\omega_i(\mu)^*\in L(\mu)^*$ be 
the dual vectors, then we obtain  
bases $\mathcal{B}^\R(\theta_i,\omega_i^*(\mu))$  resp.
$\mathcal{B}^\R(\eta_i,\omega_i^*(\mu))$
of $H_1(\XX_i;V^*_\R(\mu))$ resp. of $H_2(\XX_i;V^*_\R(\mu))$ for 
which the previous proposition continues to hold. Now we let
$\bar{\rho}_\R(\mu):=\rho_\R(\mu)\oplus\check{\rho}_\R(\mu)$ acting on 
$\overline{V}_\R(\mu):=V_\R(\mu)\oplus V_\R^*(\mu)$. Then we obtain 
bases
$\mathcal{B}^\R(\theta_i,
\omega_i(\mu))\sqcup\mathcal{B}^\R(\theta_i,\omega_i^*(\mu))$, 
$\mathcal{B}^\R(\eta_i,
\omega_i(\mu))\sqcup\mathcal{B}^\R(\eta_i,\omega_i^*(\mu))$ of
$H_1(\XX_i,\overline{V}_\R(\mu))$ resp. of 
$H_2(\XX_i,\overline{V}_\R(\mu))$. To save notation, we write
$\mathcal{B}_1(\mu):=\mathcal{B}^\R(\theta_i,
\omega_i(\mu))\sqcup\mathcal{B}^\R(\theta_i,\omega_i^*(\mu))$, 
$\mathcal{B}_2(\mu):=\mathcal{B}^\R(\eta_i,
\omega_i(\mu))\sqcup\mathcal{B}^\R(\eta_i,\omega_i^*(\mu))$.\\
We identify $H_*(\XX_i;L(m))\cong H_*(\Gamma_i;L(m))$. By Proposition
\ref{PropReg}
one has 
\begin{align*}
&2\lim\inf_{i\to\infty}\frac{\log|H_1(\Gamma_i;\overline{L}(m))_{tors}|}{
\vol(X_i)}
\\ &\geq 
\lim\inf_{i\to\infty}\frac{\log|H_1(\Gamma_i;\overline{L}(m))_{tors}
|-\log\vol_{\mathcal{B}_1(m)}
H_1(\Gamma_i;\overline{L}(m))_{free}}{\vol(X_i)}.
\end{align*}
On the other hand, by Proposition \ref{PropReg} and Proposition \ref{PropHzero}
one has 
\begin{align*}
&\lim\inf_{i\to\infty}\frac{\log|H_1(\Gamma_i;\overline{L}(m))_{tors}
|-\log\vol_{\mathcal{B}_1(m)}
H_1(\Gamma_i;\overline{L}(m))_{free}}{\vol(X_i)}\\
&\geq
\lim\inf_{i\to\infty}\biggl(\frac{\log|H_1(\Gamma_i;\overline{L}(m))_{tors}
|-\log\vol_{\mathcal{B}_1(m)}
H_1(\Gamma_i;\overline{L}(m))_{free}-\log|H_0(\Gamma_i;\overline{L}(m))_{tors}|}
{\vol(X_i) }\\
&
+\frac{\log\vol_{\mathcal{B}_2(m)}
(H_2(\Gamma_i;\overline{L}
(m))_{ free}
)-\log|H_1(\Gamma_i;\overline{L}(2))_{tors}|+\log\vol_{\mathcal{B}_1(2)}
H_1(\Gamma_i;\overline{L}(2))_{free}
}{\vol(X_i)}\\
&+\frac{\log|H_0(\Gamma_i;\overline{L}(2))_{tors}|-\log\vol_{\mathcal{B}_2(2)}
(H_2(\Gamma_i;\overline{L}(2))_{free})}{
\vol(X_i)}\biggr).
\end{align*}
By Lemma \ref{Lemreg}, Lemma \ref{LemMaMu} and since $\rho(m)$ is
self-contragredient, for each $m\in\mathbb{N}$, the last
$\lim\inf$ equals 
\[
\lim\inf_{i\to\infty}-4\frac{\log{\mathcal{T}_{{\XX}_i}(\rho(m))}}{
\vol(X_i)}.
\]
If $\lim_{i\to\infty}[\Gamma(D):\Gamma_i]=\infty$, then
$\lim_{i\to\infty}\ell(\Gamma_i)=\infty$ and thus 
by Proposition \ref{PropRT} one has
\begin{align*}
\lim\inf_{i\to\infty}-\log{\mathcal{T}_{{\XX}_i}(\rho(m))}=\frac{m(m+1)-6}{\pi}.
\end{align*}
This proves Theorem \ref{Mainthrm}.
\hfill $\square$

To prove Corollary \ref{lastcor}, we remark that 
by \eqref{eqcusp} and Lemma \ref{Lemcusp} we can estimate
\begin{align*}
\frac{\kappa(\Gamma(\aL_i))\log[\Gamma(D):\Gamma(\aL_i)]}{[
\Gamma(D):\Gamma(\aL_i)]}\leq \frac{3d_F\log{N(\aL_i)}}{N(\aL_i)}.
\end{align*}
and thus Corollary \ref{lastcor} follows from Theorem \ref{Mainthrm}. 

\begin{bmrk}
At the end of this article we want to remark that in our oppinion the assumption
made in Theorem \ref{Mainthrm} that all groups $\Gamma_i$ are contained 
in a torsion-free subgroup $\Gamma_0$ of $\Gamma(D)$ is probably 
unneccessary. We think that Theorem \ref{Mainthrm} and its proof presented here
can be
generalized to any sequence of congruence subgroups $\Gamma_i$ of 
$\Gamma(D)$ satisfying \eqref{AssSeq}. 
The assumption that all $\Gamma_i$ are contained in $\Gamma_0$ was only used in
the proof of Lemma \ref{LemReg2} and Lemma \ref{Lemreg3} where we used the
construction of an
explicit basis in the homology with twisted coefficients for the manifold $X_0$ 
given by Menal-Ferrer and Porti in \cite[Proposition 2.2]{MePo}. We think that 
these results of Menal-Ferrer and Porti can in turn be generalized to the 
space $\Gamma(D)\backslash\mathbb{H}^3$ which is not a manifold but only a good
hyperbolic orbifold of finite volume. 
\end{bmrk}


\begin{thebibliography}{bibliog}
\bibitem[Ba]{Bass}H. Bass, K-theory and stable algebra. Inst. Hautes \'Etudes
Sci. Publ. Math. No. 22 1964 5-60.
\bibitem[BO]{Bunke} U.Bunke, M. Olbrich, Selberg zeta and theta 
functions, Akademie Verlag, Berlin (1995)
\bibitem[BV]{BV} N. Bergeron, A. Venkatesh, \emph{The asymptotic growth 
of torsion homology for arithmetic groups}, Preprint 2010, 
arXiv:1004.1083 
\bibitem[CV]{CV} F. Calegari, A. Venkatesh, \emph{A torsion Jacquet-Langlands
correspondence}, Preprint 2012, arXiv:1212.3847 
\bibitem[EGM]{EGM} J. Elstrodt, F. Grunewald, E. Mennicke, \emph{Groups acting
on hyperbolic space}, Harmonic analysis and number theory. Springer Monographs
in Mathematics. Springer-Verlag, Berlin, 1998
\bibitem[Hu]{Hu} G. Humbert, \emph{Sur la mesure de classes d'Hermite de
discriminant donne dans un corp
quadratique imaginaire}, C.R. Acad. Sci. Paris, 169 (1919), 448-454.
\bibitem[MaM\"u]{MaM} S. Marshall, W. M\"uller, \emph{On the torsion in the 
cohomology of arithmetic hyperbolic 3-manifolds}, Preprint 2011, 
arXiv:1103.2262.
\bibitem[MePo1]{MePo1} P. Menal-Ferrer, J. Porti, \emph{Twisted cohomology for
hyperbolic three manifolds}, Preprint 2010, arXiv:1001.2242
\bibitem[MePo2]{MePo} P. Menal-Ferrer, J. Porti, \emph{Higher dimensional
Reidemeister torsion invariants for cusped hyperbolic 3-manifolds}, Preprint
2011, arXiv:1110.3718 
\bibitem [M\"u1]{Muellereins} W. M\"uller, \emph{Analytic torsion and R-torsion
for unimodular representations}, J. Amer. Math. Soc. 6 (1993), 721-753
\bibitem[M\"u2]{Muller} W. M\"uller, \emph{The asymptotics of the
Ray-Singer analytic torsion of hyperbolic 3 manifolds}, in: Metric and
Differential Geometry, The Jeff Cheeger Anniversary Volume, Progress in Math.
Vol. 297, pp. 317 - 352, Birkh\"auser, 2012. 
\bibitem[M\"uPf] {MP1} W. M\"uller, J. Pfaff, \emph{The asymptotics of the
Ray-Singer analytic torsion for hyperbolic manifolds}, Intern. Math. Research 
Notices 2012, doi: 10.1093/imrn/rns130.
\bibitem[Pf]{Pf} J. Pfaff, \emph{Selberg zeta functions on odd-dimensional
hyperbolic manifolds of
finite
volume}, Preprint 2012, arXiv:1205.1754 
\bibitem[Ra1]{Ra} J. Raimbault, \emph{Torsion homologique dans les
rev\^{e}tements
finis}, PhD-thesis 2012, 
Universit\'{e} Paris VI - Pierre et Marie Curie
\bibitem[Ra2]{Ra2} J. Raimbault, \emph{Asymptotics of analytic torsion for
hyperbolic three-manifolds}, Preprint 2012, arXiv:1212.3161
\bibitem[Sa]{Sa} P. Sarnak,  \emph{The arithmetic and geometry of some
hyperbolic three-manifolds}, Acta Math. 151 (1983), no. 3-4, 253-295
\bibitem[Sh]{Sh} G. Shimura, \emph{Introduction to the arithmetic theory of
automorphic functions}, Kano Memorial Lectures, No. 1. Publications of the
Mathematical Society of Japan, No. 11. Iwanami Shoten, Publishers, Tokyo;
Princeton University Press, Princeton, N.J., 1971
\bibitem[Wa]{Wa} C. Wall, \emph{Surgery of Non-Simply-Connected Manifolds}, Ann.
of Math. (2) 84 (1966), 217-276
\end{thebibliography}
\end{document}